\DeclareMathAlphabet{\mathpzc}{OT1}{pzc}{m}{it}
\DeclareFontFamily{OT1}{rsfs}{}
\DeclareFontShape{OT1}{rsfs}{n}{it}{<-> rsfs10}{}
\DeclareMathAlphabet{\mathscr}{T1}{rsfs}{n}{it}
\numberwithin{equation}{section}
\numberwithin{equation}{section}
\numberwithin{figure}{section}
\theoremstyle{plain}
\newtheorem{thm}{Theorem}[section]
\newtheorem{prop}[thm]{Proposition}
\newtheorem{cor}[thm]{Corollary}
\newtheorem{lem}[thm]{Lemma}
\theoremstyle{remark}
\newtheorem*{claim*}{Claim}
\newcommand{\cO}{\mathcal{O}}
\newcommand{\cS}{\mathcal{S}}
\newcommand{\bG}{\mathbb{G}}
\newcommand{\bL}{\mathbb{L}}
\newcommand{\bR}{\mathbb{R}}
\newcommand{\bZ}{\mathbb{Z}}
\newcommand{\bQ}{\mathbb{Q}}
\newcommand{\bN}{\mathbb{N}}
\newcommand{\bH}{\mathbb{H}}
\newcommand{\gog}{\mathfrak{g}}
\newcommand{\goh}{\mathfrak{h}}
\newcommand{\gor}{\mathfrak{r}}
\newcommand{\goS}{\mathfrak{S}}
\newcommand{\SL}{\operatorname{SL}}
\newcommand{\Lie}{\operatorname{Lie}}
\newcommand{\vol}{\operatorname{vol}}
\newcommand\norm[1]{\left\|#1\right\|}
\newcommand\set[1]{\left\{#1\right\}}
\newcommand\pa[1]{\left(#1\right)}
\newcommand\av[1]{\left|#1\right|}
\newcommand{\height}{{\rm ht}}
\newcommand{\ra}{\rightarrow}
\newcommand{\onto}{\xymatrix{\ar@{>>}[r]&}}
\newcommand{\da}[4]{\xymatrix{#1 \ar@<.5ex>[r]^{#2} \ar@<-.5ex>[r]_{#3} & #4}}
\newif\ifdraft\drafttrue
\newcommand{\manote}[1]{\marginpar{\color{magenta}\tiny [MHAG] #1}}
\newcommand\xappa\kappa
\newcommand\yota\iota
\newcounter{consta}
\renewcommand{\theconsta}{{\xappa_{\arabic{consta}}}}
\newcounter{constb}
\newcounter{constc}[section]
\renewcommand{\theconstc}{{c_{\arabic{constc}}}}
\newcommand{\consta}{\refstepcounter{consta}\theconsta}
\newcommand{\constc}{\refstepcounter{constc}\theconstc}
\newcommand{\Ad}{\operatorname{Ad}}
\newcommand{\beq}{\begin{equation}}
\newcommand{\eeq}{\end{equation}}
\newcommand{\gfrak}{\mathfrak{g}}
\newcommand{\Sfrak}{\mathfrak{S}}
\newcommand{\disc}{{\rm disc}}
\newcommand{\Rmetric}{{\rm dist}}
\newcommand{\temp}{{M}_0}
\newcommand{\mht}{\operatorname{mht}}
\begin{document}

\title{On Effective Equidistribution for Quotients of $\SL(d, \mathbb R)$}

\author[M. Aka]{Menny Aka}
\author[M. Einsiedler]{Manfred Einsiedler}
\author[H. Li]{Han Li}
\author[A. Mohammadi]{Amir Mohammadi}

\begin{abstract}
We prove the first case of polynomially effective equidistribution of closed orbits of 
semisimple groups with nontrivial centralizer. The proof relies on uniform spectral gap, 
builds on, and extends work of Einsiedler, Margulis, and Venkatesh. 
\end{abstract}

\address{M.A. Departement Mathematik\\
ETH Z\"urich\\
R\"amistrasse 101\\
8092 Zurich\\
Switzerland}

\email{mennyaka@math.ethz.ch}
\thanks{M.A.\ acknowledges the support of ISEF, and SNF Grant 200021-152819. }

\address{M.E. Departement Mathematik\\
ETH Z\"urich\\
R\"amistrasse 101\\
8092 Zurich\\
Switzerland}

\email{manfred.einsiedler@math.ethz.ch}
\thanks{M.E.\ acknowledges the support of the SNF Grant 200021-152819 and 200020-178958. }

\address{H.L. Department of Mathematics and Computer Science, Wesleyan University, 265 Church Street, Middletown, CT 06457}
\email{hli03@wesleyan.edu}
\thanks{H.L.\ acknowledges support by Simons Foundation (426090) and NSF (DMS 1700109).}

\address{A.M.: Department of Mathematics, University of California, San Diego, CA 92093}
\email{ammohammadi@ucsd.edu}
\thanks{A.M.\ acknowledges support by the NSF (DMS 1724316, 1764246, 1128155) and Alfred P.~Sloan Research Fellowship.}
 
\maketitle

\section{Introduction}
Rigidity results for dynamics of group actions on homogeneous spaces, i.e., quotients of Lie groups by discrete subgroups of finite covolume, have been a subject of great interest with several striking results and applications.
Indeed the solution of Oppenheim's conjecture by Margulis in \cite{Margulis-Oppenheim-proof} opened up a new chapter 
in the dialogue between homogeneous dynamics and number theory. The landmark results in \cite{Ratner-proc-measure,Ratner-acta-measure,Ratner-invent-solvable,Ratner-measure-rigidity,Ratner-bull-distribution}
of Ratner on unipotent dynamics became quite quickly the engines to many interesting applications
of homogeneous dynamics. 
The proofs of these rigidity results use techniques from ergodic theory and are often not quantitative. It is a challenging problem, with several applications, to provide effective and quantitative accounts of these rigidity results.

In \cite{EMV}  a polynomially effective equidistribution theorem for closed orbits of semisimple
group $H$ is proven under the assumption that the Lie algebra $\mathfrak{h}$ of $H$ has trivial centralizer in the Lie algebra $\mathfrak{g}$ of the ambient group $G$. As explained in \cite{EMV} this centralizer assumption does not seem to be truly essential to the method. We consider a first case where similar results can be obtained in the presence of a one-dimensional centralizer.

Let $k,l\in \bN$ and assume $\bG$ is a $\bQ$-form of ${\rm SL}_{k+l}$ that splits over $\bR.$ 
Consider a $\bQ$-embedding 
\begin{equation}\label{eq:Q-embedding}
\rho:\bG\ra \SL_N
\end{equation} 
for some $N\in \bN$. Set $G=\bG(\bR)\cong\SL_{k+l}(\bR).$ 
By a theorem of Borel and Harish-Chandra $\Gamma:=\rho^{-1}\pa{\SL_N(\bZ)}\cap G$ 
is a lattice in $G$. We define $X=G/\Gamma.$ 
Throughout the paper we divide matrices in ${\rm Mat}_{k+l}$ into blocks consisting of the first $k$ or last $l$ 
rows and columns. Moreover, we consider 
the algebraic group
$$
\bH=\SL_k\times \SL_l=\left[
\begin{array}{c|c}
\SL_k &0 \\ \hline
0 &\SL_l
\end{array}\right]
 $$
over $\bR;$ let $H=\bH(\bR)$.

As the centralizer of $H$ is not trivial, $H$-orbits may lie far from any given compact set, e.g.\ this 
happens for the $\bQ$-split group $\bG=\SL_{k+l}$ and orbits $Ha\SL_{k+l}(\bZ)\subset\SL_{k+l}(\bR)/\SL_{k+l}(\bZ)$ 
for a large $a\in C_G(H)$. 
This is obviously an obstruction to equidistribution, and we take this possibility into account
via a height function $\height(\cdot)$ on $X$ whose definition is given in \eqref{eq:height} and the function
$\mht(\mathcal Q)=\inf\{\height(x): x\in \mathcal Q\}$ for subsets $\mathcal Q\subset X.$

A closed subgroup $S\subset G$ containing $H$ we will call an \emph{intermediate} subgroup; we will show in~\S\ref{sec:subgroups} that there are only finitely many such subgroups.
Similar to~\cite{EMV}, we define the notion of volume of a closed $S$-orbit using a Haar measure on $S$. More precisely 
we fix a Haar measure $m_S$
on $S$ and define the \emph{volume} $\vol(Sx)$ of a closed $S$-orbit $Sx$ to be $m_S(F)$ where $F\subset S$ is a Borel fundamental domain for the quotient map $S\to Sx$. In contrast, $\mu_{Sx}$  denotes the normalized Haar probability measure on the orbit $Sx$. 

\begin{thm}\label{thm:main}
Assume that $(k,l)\neq (2,2)$ and fix $G,H,\Gamma$ as above. There exist $d\in \bN$ and  $\consta\label{vol exponent main},\consta\label{height exponent} >0$ depending only on $G$ and $H$ and a constant $V_0=V_0(G,H,\Gamma)>0$ such that for all $V>V_0$ there exists an intermediate subgroup $H\subset S\subset G$ such that for any $f\in C_c^{\infty}(X)$ we have  
$$\av{\mu_{Hx_0}(f)-\mu_{Sx_0}(f)}\ll \cS_d(f)\mht(Hx_0)^{\ref{height exponent}} V^{-\ref{vol exponent main}}\text{ and }{\rm vol}(Sx_0)\leq V,$$
for any closed orbit $Hx_0$,
where the implicit constant depends only on $G,H,\Gamma$ and $\cS_d$ is a Sobolev norm (defined in \S \ref{sec:sob}).
\end{thm}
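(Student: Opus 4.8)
The plan is to follow the strategy of Einsiedler--Margulis--Venkatesh (EMV), adapting each ingredient to the presence of the one-dimensional centralizer $C_G(H)$. The starting point is to take a closed $H$-orbit $Hx_0$ of large volume $V$ (if $\operatorname{vol}(Hx_0)\le V_0$ the statement is vacuous after enlarging the implied constant), fix a suitable $H$-invariant probability measure $\mu = \mu_{Hx_0}$, and look for an intermediate closed orbit $Sx_0$ whose measure $\mu_{Sx_0}$ nearly absorbs $\mu$. The engine is the effective ergodic theorem combined with a ``thickening'' argument: averaging $\mu$ over a small ball in $G$ and using the uniform spectral gap (mixing with a polynomial rate, valid here because $\bG$ is $\bR$-split so all the relevant arithmetic quotients have a uniform spectral gap) one produces, for any $f\in C_c^\infty(X)$, a function that is almost invariant under a large part of the group. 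The novelty compared with EMV is that one cannot work on a fixed compact piece of $X$: the orbit may be driven out towards the cusp by $C_G(H)$, which is exactly why the height function $\height(\cdot)$ and the quantity $\mht(Hx_0)$ enter the bound.

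The main steps, in order, are as follows. \textbf{(1) Geometry and height control.} Set up the height function \eqref{eq:height} on $X$, establish its basic equivariance/Lipschitz properties, and prove a non-escape-of-mass statement: the $H$-invariant measure $\mu_{Hx_0}$ cannot concentrate too much near the cusp, with quantitative dependence on $\mht(Hx_0)$. This uses that $H$ is semisimple and generated by unipotents, so that a quantitative version of the linearization / $(C,\alpha)$-good technique applies, but the estimates must be made uniform as the base point moves along $C_G(H)$. \textbf{(2) Effective generation and the initial dimension.} Produce an effective ``closing'' statement: starting from $\mu_{Hx_0}$, use the spectral gap to find a point $x$ with $\height(x)\ll \mht(Hx_0)$ near which $\mu$ is approximately invariant under an $H$-sized neighborhood; then run an effective version of the linear-algebra argument (describing the Lie subalgebras between $\gfrak{h}$ and $\gfrak{g}$, a finite poset because $(k,l)\ne(2,2)$ guarantees $H$ is a maximal-type subgroup with a well-understood lattice of intermediate algebras) to locate the minimal intermediate subgroup $S$ such that the invariance propagates. \textbf{(3) The induction on intermediate subgroups.} If $S=G$ we are done with $\mu_{Sx_0}$ the Haar measure; otherwise $Sx_0$ is a closed orbit of strictly smaller dimension, $\operatorname{vol}(Sx_0)\le V$, and one must show $\mu_{Hx_0}$ is polynomially close to $\mu_{Sx_0}$, which is the same type of statement one level down and is handled by the same mechanism (effective equidistribution of $Hx_0$ inside the homogeneous space $Sx_0$ of $S$). \textbf{(4) Bookkeeping of exponents.} Track the volume exponent $\ref{vol exponent main}$ and the height exponent $\ref{height exponent}$ through the spectral gap, the Sobolev-norm estimates of \S\ref{sec:sob}, and the finitely many induction steps, checking the losses compound only polynomially.

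\textbf{The main obstacle} is Step (1)--(2) done uniformly in the cusp: in EMV the orbit lives in a fixed compact set, so all the ``effective equidistribution on a ball'' estimates have absolute constants, whereas here every such estimate must be tracked as a polynomial in $\mht(Hx_0)$, and one must rule out the scenario where the $H$-orbit spends almost all its mass deep in the cusp (where the injectivity radius is tiny and the thickening argument degrades). Concretely, the hard analytic input is a quantitative recurrence statement for the $H$-action with the recurrence rate controlled by the height of the orbit, together with the verification that the spectral gap used is genuinely uniform over the family of quotients $Sx_0$ appearing in the induction --- this is where the $\bR$-split hypothesis on $\bG$ (and the exclusion of $(k,l)=(2,2)$, which would create an extra intermediate subgroup and spoil maximality) is essential. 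A secondary technical point is making the passage ``almost-invariance under a large set $\Rightarrow$ invariance under the group $S$ it generates'' effective with only polynomial loss, which requires an effective version of the Lie-algebra generation estimates adapted to the explicit block structure of $H\subset G$.
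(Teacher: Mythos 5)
Your high-level picture (effective ergodic theorem plus spectral gap plus nondivergence plus the lattice of intermediate subalgebras) is in the right spirit, and Steps (1) and (2) roughly match what the paper does in \S4--\S6. But Step (3) is where the proposal diverges from anything that would work in this setting, and it glosses over the point that makes the argument close.

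You propose an \emph{induction on intermediate subgroups}: having located $S\supsetneq H$, treat the comparison of $\mu_{Hx_0}$ with $\mu_{Sx_0}$ as ``the same statement one level down, inside the homogeneous space $Sx_0$.'' This does not make sense as stated and is not what the paper does. By Proposition~\ref{Prop:subgroups not 2,2}, the proper intermediate groups (up to the $\pm$ symmetry and the $A$-factor) are $P^\pm=H\ltimes W^\pm$, which are \emph{parabolic}, not semisimple, and $H$ is already a Levi factor of $P^+$; so ``$Hx_0$ inside $Sx_0$'' is not a smaller instance of the theorem, it is a question about equidistribution of a Levi orbit inside a parabolic orbit, for which the EMV/spectral-gap machine is not directly applicable. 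Moreover, there is no recursion: the measure is promoted from $H$-almost-invariance to $P^\pm$- or $G$-almost-invariance in \emph{one} step (Prop.~\ref{prop:eff-gen}), and the paper then closes the argument immediately, without ever re-running the machine. The crucial ingredient you omit is the effective closing lemma via Dani--Margulis nondivergence (Theorem~\ref{thm:DM-parabolic} and Corollary~\ref{cor:closing-lem}). It gives a dichotomy: either $a_{-s}\mu$ has most of its mass in a fixed compact set, in which case the $W$-almost-invariance is fed into the effective equidistribution of horospherical translates (Lemma~\ref{lem:mixing}) to conclude $\mu\approx m_X$ with $S=G$; or the closing lemma produces a rational parabolic so that $P^+x_0$ is closed with $\operatorname{vol}(P^+x_0)\ll\tau^\star$, and then $\mu_{Hx_0}$ and $\mu_{P^+x_0}$ are compared directly by producing a \emph{common} generic point for both measures (Lemma~\ref{lem:generic-almost-inv} applied to each, plus Fubini and the positive-proportion sets $F$), not by re-applying the theorem at a lower level. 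Without the closing-lemma dichotomy you have no mechanism to identify the closed orbit $Sx_0$ or to bound its volume, and the ``effective equidistribution of $Hx_0$ inside $Sx_0$'' you invoke in the $S=P^+$ branch is left entirely unjustified.

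A smaller point: you say ``if $\operatorname{vol}(Hx_0)\le V_0$ the statement is vacuous.'' In the theorem, $V$ is a free parameter independent of $\operatorname{vol}(Hx_0)$; when $\operatorname{vol}(Hx_0)$ is small relative to $V$ one should simply take $S=H$ (and conversely, if $\mht(Hx_0)$ is large the bound is trivial). This is exactly the role the height exponent $\ref{height exponent}$ plays via \eqref{eq:low}, \eqref{be:ht-tau} and \eqref{eq:stupid-2}; it is a case split, not a normalization by $V$.
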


The general strategy of the proof is similar to that of~\cite{EMV}. We use spectral
gap to show in an effective way that most points are generic 
in an effective manner for the dynamics of a unipotent subgroup. 
In fact, we are relying on uniform spectral as provided by the so-called property ($\tau$),
which in general is the combination of results of Selberg \cite{selberg1965estimation} for congruence quotients of 
the split form of $\operatorname{SL}_2$, Jaquet-Langlands \cite{jacquet2006automorphic} 
for other forms of $\operatorname{SL}_2$,
extensions of this by Burger and Sarnak \cite{burger1991ramanujan}, Kazhdan's property (T) 
from \cite{kazhdan1967connection}, and Clozel's work \cite{clozel2003demonstration}. We refer also to \cite[\S4]{EMMV}
for more details and a more dynamical proof of that fact. In the context of this paper
Kazhdan's property (T) is sufficient once $\min(k,\ell)\geq 3$. 

Using the effectively generic points and an effective version of the polynomial divergence property,
that also played a big role in the work of Margulis and Ratner mentioned before, 
we effectively produce almost invariance under new elements transversal to $H.$
This is then upgraded to establish
almost invariance under a subgroup $S\supsetneq H.$
The special choice of $H,$ in particular, the fact that the centralizer of $H$ is one dimensional simplifies the proof
in several places. This makes the possibilities of $S$ quite restricted, see~\S\ref{sec:subgroups}, which allows us
to use well-known facts regarding effective equidistribution of horospherical orbits in our proof.
The special case at hand also allows us to utilize known results from nondivergence of unipotent flows and get
a much simplified version of a closing lemma --  an effective closing lemma is one of the main technical ingredients in~\cite{EMV}, 
which is used to handle the intermediate orbits.
Indeed, we show that  $Sx_0$ is closed
unless the orbit $Hx_0$ is far in the cusp, see~\S\ref{sec:DM}.

The case of $(k,l)=(2,2)$ is slightly more complicated due to the presence of further intermediate subgroups $S$ (see~\S\ref{sec:subgroups}) and we avoid it in the current paper. 

We thank the anonymous referee for her or his remarks which helped to improve the presentation of the paper.

\section{Notation and Preliminaries}
As this work builds heavily on \cite{EMV} we borrow much notation and conventions from loc.\ cit.
\subsection{Constants and their dependency and Landau's notation}
The notation $A \ll B$, meaning ``there exists a constant $\constc\label{sample-c}>0$ so that $A \leq \ref{sample-c} B$'',
will be used; the implicit constant $\ref{sample-c}$ is permitted to depend on
$\bG$ and $\rho$, but (unless otherwise noted) not on anything else. We write~$A\asymp B$ if~$A\ll B\ll A$.
We will use~$\ref{sample-c},\constc,\ldots$ to denote  positive constants 
depending on~$\bG$ and~$\rho$ (and their numbering is reset at 
the end of a section).
If a constant (implicit or explicit) depends on another parameter or only on a certain
part of~$(\bG,\rho)$, we will make this clear by writing e.g.~$\ll_\epsilon$,~$\constc(N)$, etc.

We will use~$\ref{vol exponent main}, \ref{height exponent},\consta,\ldots$ to denote positive constants depending only on $\dim\bG.$
We also adopt the $\star$-notation from~\cite{EMV}:
we write $B=A^{\pm\star}$ if $B=\constc A^{\pm\consta\label{sample-kappa}}.$ 
Similarly one defines
$B\ll A^\star,$ $B\gg A^\star$.
Finally we also write~$A\asymp B^\star$ if~$A^\star\ll B\ll A^\star$ (possibly with different exponents).

\subsection{Setup}\label{sec:Lie-notation}\label{sec:setup}
 Much of the notation below will depend on the choice of $k,$ $l$ and $N$ in~\eqref{eq:Q-embedding}
which are fixed throughout the paper. 

We recall the definition of congruence lattices in our setting. A congruence subgroup of $\SL_N(\bR)$ is a subgroup commensurable to $\SL_N(\bZ)$ containing a principal congruence subgroup, i.e.\ a kernel of the reduction map  $\SL_N(\bZ)\ra\SL_N(\bZ/D\bZ)$ for some $D\in \bN$. 
We assume that $\Gamma\supset\rho^{-1}(\Gamma')$ where $\Gamma'$ is a congruence subgroup of $\SL_N$ and $\rho$ was defined in~\eqref{eq:Q-embedding}. 
By the same argument as in~\cite[\S 1.6.1]{EMV} Theorem \ref{thm:main} also holds for arithmetic subgroup at the cost of allowing the exponent to depend on $\Gamma$. 

Given an element $g\in G$ we let $|g|=\max\{\|g\|_\infty,\|g^{-1}\|_\infty\}.$  
We fix a Euclidean norm $\norm{\cdot}$ on $\gog:={\rm Lie}(G)$ such that $\norm{[v_1,v_2]}\leq \norm{v_1}\norm{v_2}$.  The embedding $\rho:\bG\ra \SL_N$ induces a $\bQ$-structure on $\gog$ and we may choose a $\Gamma$-stable lattice $\gog_\bZ$ such that $[\gog_\bZ,\gog_\bZ]\subset \gog_\bZ$.
We also let $\Rmetric(\cdot,\cdot)$ denote a right invariant Riemannian metric on $G.$ 

The choice of the inner product on $\gog$ induces a normalization of the Haar measure on any closed subgroup of $G$ and therefore a notion of \emph{volume} for orbits  of these subgroups in $X$. Given a subgroup $P$ and a point $x\in X$ we denote this volume measure as $\operatorname{d}\!{\rm vol}$ and the volume of the orbit $Px$ as ${\rm vol}(Px)$. In contrast, $\mu_{Px}$  denotes the normalized Haar probability measure on the orbit $Px$. 
Let us write $g.f(x):=f(g^{-1}x)$ for $g\in G$ and $f\in C(X)$ and $x\in X$. Similarly, for any measure $\nu$ on $X$  we let $g_*\nu$ be the measure defined by $g_*\nu(A)=\nu(g^{-1}A)$ for any Borel set $A\subset X$.

We choose $\mathfrak r$, a particular ${\Ad}(H)$-invariant complement of $\goh={\rm Lie}(H)\subset \gog$, by letting
$\mathfrak r=\mathfrak r_0\oplus \mathfrak r_1$ where, using block notation, $\mathfrak r_1=\mathfrak r_1^+\oplus \mathfrak r_1^-$ and 

$$
\mathfrak r_0={\rm span}\left[
\begin{array}{c|c}
l\cdot I_k &0 \\ \hline
0 &-k\cdot I_l
\end{array}\right],
\quad 
\mathfrak r_1^+=\left[
\begin{array}{c|c}
0 &* \\ \hline
0 &0
\end{array}\right],
\quad 
\mathfrak r_1^-=\left[
\begin{array}{c|c}
0 &0 \\ \hline
* &0
\end{array}\right].
$$
Let $u_k(t)\in \SL_k(\bR)$ denote the unipotent element

$$
\left[\begin{array}{cccc}
1&\frac{t^1}{1!}&\cdots &\frac{t^{k-1}}{(k-1)!}\\
0&1&\cdots &\frac{t^{k-2}}{\pa{k-2}!}\\
\vdots & &\ddots &\vdots \\
0&0&0 &1\\
\end{array}\right]=\exp 
\left[\begin{array}{cccc}

0&t&\cdots &0\\
\vdots & \ddots&\ddots &\vdots  \\
0&\cdots&0 &t\\
0&ֿ\cdots&0 &0\\
\end{array}\right]
$$
and let $$U=\set{u(t):t\in \bR},\qquad u(t)=\left[
\begin{array}{c|c}
u_k(t) &0 \\ \hline
0 &u_l(t)
\end{array}\right].$$ 
For $\mathfrak s\in\set{\mathfrak r_1,\mathfrak r_1^+,\mathfrak r_1^-}$ we put ${\rm Fix}_U(\mathfrak s):=\{w\in\mathfrak s: \Ad(u)w=w\text{ for all } u\in U\}.$

We also define the one-parameter subgroup 
$$
a_t=
\left[
\begin{array}{c|c}
e^{lt}I_k &0 \\ \hline
0 & e^{-kt}I_l
\end{array}\right],\qquad A=\{a_t:t\in \bR\}
$$
whose Lie algebra is $\mathfrak r_0$.

For a diagonalizable element $a$ we define the {\em expanding horospherical} subgroup 
\[
W^+_G(a)=\set{g\in G: a^nga^{-n}\ra e,\text{ as } n\ra-\infty},
\] 
and the {\em contracting horospherical} subgroup $W^-_G(a)=W^+_G(a^{-1})$.

Put $W:=W^+_G(a_1)$ for $a_1\in A$ as above, and note that 
\begin{equation}\label{eq:def-W}
W=
\left[
\begin{array}{c|c}
I_k & * \\ \hline
0 & I_l
\end{array}\right].
\end{equation} 
Finally we let $P^\pm$ denote the connected subgroups of $G$ with 
$$
\Lie(P^{+})=\Lie(H)\oplus \mathfrak r_1^{+},\quad\Lie(P^{-})=\Lie(H)\oplus \mathfrak r_1^{-}.
$$
\subsection{Height, discriminant and volume}\label{sec:ht-disc}
Given a lattice $\mathfrak s_{\bZ}$ in a vector space $\mathfrak s$ 
and a subspace $\mathfrak l$ that intersects $\mathfrak s_{\bZ}$ in a lattice
we define the covolume (or discriminant) of $\mathfrak l$ by setting
\begin{equation}\label{eq:discriminant}
\mathrm{covol}(\mathfrak l):= \disc(\mathfrak l):=\|p_{\mathfrak l}\| 
\end{equation}
where $p_{\mathfrak l}$ is a primitive vector in 
\begin{equation}\label{eq:primitive-vector}
\textstyle{\bigwedge^{\dim\mathfrak l}}\mathfrak l\cap\textstyle{\bigwedge^{\dim\mathfrak l}}\mathfrak s_{\bZ}.
\end{equation}

For an element $g\in G$ we say that a subspace $\mathfrak l\subset\gog$ is {\em $g$-rational} if $\mathfrak l\cap {\rm Ad}(g)(\gog_\bZ)$ is a lattice in $\mathfrak l$. 
It will be called simply \emph{rational} if it is $g$-rational for $g=e$.
Given a $g$-rational subspace $\mathfrak l$ we define the covolume of $\mathfrak l$ using~\eqref{eq:discriminant}
with $\mathfrak s=\gog$ and $\mathfrak s_{\bZ}=\Ad(g)\gog_{\bZ}$.

For a $\bQ$-subgroup $\mathbb L$ of $\mathbb G$
we put $\disc(\mathbb L):=\disc(\Lie(\mathbb L)).$ 

Recall that the lattice $\gog_\bZ$ is $\Gamma$-stable.
Hence we may define the \emph{height} of a point $x\in X$ by
\beq\label{eq:height}
\height (x)=\sup\set{\norm{\Ad(g).v}^{-1}:x=g\Gamma,\ v\in\gog_\bZ\setminus\{0\}}.
\eeq
The height of $x$ in~$\SL_N(\bR)/\SL_N(\bZ)$ is defined similarly.
This defines the term $\mht(Hx_0)$
appearing in the statement of Theorem~\ref{thm:main}.
 
Let $\mathfrak S(R)=\set{x\in X: \height (x)\leq R}$. 
By Mahler's compactness criterion, the sets $\set{\mathfrak S(R):R>0}$ are all compact and 
$\cup_{R>0}\mathfrak S(R)=X$.

\subsection{Spectral input}\label{ss:spectral-input}
Let us denote by $L$ the group generated by $U$ and its transpose; it is isomorphic to $\SL_2(\mathbb R)$ and we call it the \emph{principal} $\SL_2(\mathbb R)$.
Also let $P^\pm$ be defined as in~\S\ref{sec:setup}. 

We will use the following as a blackbox: The representations of $L$, the principal $\SL_2(\mathbb R)$, on 
\[
L_0^2(\nu)=\bigl\{f\in L^2(X,\nu):\textstyle\int f\operatorname{d}\!\nu=0\bigr\},
\]
are $1/\temp$-tempered (i.e.\ the matrix coefficients of the $\temp$-fold tensor product
are in~$L^{2+\epsilon}(\SL_2(\mathbb R))$ for all~$\epsilon>0$), where
$\nu$ is the $S$-invariant probability measure on a closed $S$-orbit with $S=H,P^\pm$ or  $S=G$. 

Since $H\subset S$ for all choices of $S$ above, $1/\temp$-temperedness follows directly in the case when $H$ has property (T), see \cite[Thm.~1.1--1.2]{OhDuke},
and in the general case we may apply property $(\tau)$ in the strong form, see~\cite{ClozelTau}, \cite{GMO} and \cite[\S6]{EMV}.

\subsection{Sobolev norms}\label{sec:sob}
We now recall the definition of a certain family of Sobolev norms and their main properties (see ~\cite[\S 3.7]{EMV}). For any integer $d\geq0$ we let $\cS_d$ be the Sobolev norm on $X$ defined by
\begin{equation}\label{sobolev}
\cS_d(f)^2=\sum_{\mathcal{D}}\|\height(\cdot)^d\mathcal{D}f\|_2^2,
\end{equation}
where $f\in C_c^\infty (X)$ and the sum is taken over all $\mathcal D\in U(\gog)$, the universal enveloping algebra of $\gog$, which are monomials in a chosen basis of $\gog$ of degree at most $d.$ 
We will need the following properties of $\cS_d$. There exists a constant $\consta\label{sob-prod}$ such that for any $d\geq \ref{sob-prod}$ and any $g\in G$ and $f\in C_c^\infty(X)$, we have
\begin{enumerate}
\item[($\cS$-1)] For any $g\in G$ and $f\in C_c^\infty(X)$ we have 
\[
\cS_d(g.f)\ll_d |g|^{3d}\cS_d(f),
\]

\item[($\cS$-2)] For any $f\in C_c^\infty(X)$ we have 
\[
\|f\|_\infty\ll_d \cS_d(f),
\]

\item[($\cS$-3)] For any $g\in G$ and $f\in C_c^\infty(X)$ we have 
\[
\|g.f-f\|_\infty\ll_d {\Rmetric}(e,g)\cS_d(f),
\]

\item[($\cS$-4)] For any $f_1,f_2\in C_c^\infty(X)$ we have 
\[
\cS_d(f_1f_2)\ll_d\cS_{d+\ref{sob-prod}}(f_1)\cS_{d+\ref{sob-prod}}(f_2).
\]

\item[($\cS$-5)] Let $\nu$ and $\temp$ be as in \S\ref{ss:spectral-input}. We have 
\beq
\label{sobnormmc}\Big| \langle u(t).f_1, f_2 \rangle_{L^2(\nu)} - 
	\nu( f_1)\nu (\bar f_2) \Big|
 \ll_d (1+|t|)^{-\frac{1}{2\temp}} \cS_{d}(f_1) \cS_d(f_2).
\eeq
\end{enumerate}

For a discussion of the Sobolev norm, the reason for introducing the factor $\height(\cdot)^d,$ and the proofs of the above properties we refer to~\cite[\S5]{EMV}.

Let $\cS_d$ be as above and let $\epsilon>0$. We say that a measure $\sigma$ is $\epsilon$-almost invariant under $g\in G$ (w.r.t.\ $\cS_d$) if 
\[
|\sigma(g.f)-\sigma(f)|\leq\epsilon\;\cS_d(f)\quad\text{for all $f\in C_c^\infty(X).$}
\] 
We say that $\sigma$ is $\epsilon$-almost invariant under a subgroup 
$L\subset G$ if it is $\epsilon$-almost invariant under all $g\in L$ with $|g|\leq 2.$
Similarly, given $w\in\gog$ we say $\sigma$ is $\epsilon$-almost invariant under $w$ if
$\sigma$ is $\epsilon$-almost invariant under $\exp(tw)$ for all $|t|\leq 2.$

\section{Structure of intermediate subgroups}\label{subsec:Inter} 
\label{sec:subgroups}
 Any finite-dimensional representation of $H$ decomposes into irreducible sub-representations as $H$ is semisimple.
In order to study the connected intermediate subgroups we may work with the Lie algebra of $G$, see \cite[\S7]{Borel-AlgGrBook}. Consider the adjoint representation of $H$ on $\Lie(G)$. It decomposes as 
 $$
 \Lie(H)\oplus \mathfrak r_1^+\oplus \mathfrak r_1^-\oplus \mathfrak r_0.
 $$ 
Indeed, it is easily verified that each of these factors are sub-representations and a dimension count shows that it is a complete decomposition.  The analysis of the possible intermediate subgroups follows simply from noting  that for any intermediate closed subgroup with $H\subset S\subset G$,  $\Lie S$ will be a subrepresentation, which is also a Lie subalgebra, of $\Lie(G)$. 

\begin{prop}\label{Prop:subgroups not 2,2}
Fix $(k,l)$ with $\max\set{k,l}\geq 3$ and let $S$ be a closed connected 
subgroup with $H\subset S\subset G$. Then $$S\in\set{ H,P^+,P^-,AH,AP^+,AP^-,G}.$$ 
$$AP^{+}=\exp(\Lie(H)\oplus \mathfrak r_1^{+}\oplus \mathfrak r_0),\, AP^{-}=\exp(\Lie(H)\oplus \mathfrak r_1^{-}\oplus \mathfrak r_0)$$ $$AH=\exp(\Lie(H)\oplus \mathfrak r_0).$$ 
\end{prop}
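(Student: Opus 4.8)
The strategy is the one indicated right before the statement: a closed connected subgroup $S$ with $H\subset S\subset G$ has $\mathfrak s:=\Lie(S)$ an $\Ad(H)$-invariant Lie subalgebra of $\gog$ containing $\goh$. Since the adjoint representation of $H$ decomposes as $\goh\oplus\mathfrak r_1^+\oplus\mathfrak r_1^-\oplus\mathfrak r_0$ into pairwise inequivalent irreducibles (here is where $\max\{k,l\}\ge 3$ enters: for $(k,l)=(2,2)$ the representations $\mathfrak r_1^+$ and $\mathfrak r_1^-$ become isomorphic as $H$-modules, producing a continuum of diagonal subrepresentations and hence the extra intermediate subgroups we are excluding; I would state this inequivalence explicitly as the first lemma of the proof, checking it by comparing highest weights or simply dimensions of the two factors), every $\Ad(H)$-invariant subspace is a sum of a sub-collection of these four summands. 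So $\mathfrak s=\goh\oplus(\text{choice of }\mathfrak r_0)\oplus(\text{choice of }\mathfrak r_1^+)\oplus(\text{choice of }\mathfrak r_1^-)$, giving at most $2^3=8$ candidate subspaces.

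Next I would impose the constraint that $\mathfrak s$ is closed under the bracket, eliminating the non-Lie candidates. The key bracket relations, all read off from block matrix multiplication, are: $[\mathfrak r_1^+,\mathfrak r_1^-]\not\subset\goh$ — indeed $[\smallmat{0&X\\0&0},\smallmat{0&0\\Y&0}]=\smallmat{XY&0\\0&-YX}$, whose trace-adjusted part has a nonzero $\mathfrak r_0$-component (take $X,Y$ with $\mathrm{tr}(XY)\ne 0$), so if both $\mathfrak r_1^+$ and $\mathfrak r_1^-$ lie in $\mathfrak s$ then so does $\mathfrak r_0$; and $[\mathfrak r_0,\mathfrak r_1^\pm]=\mathfrak r_1^\pm$ (a nonzero scalar times the identity acting on the off-diagonal block by a nonzero eigenvalue), so adding $\mathfrak r_0$ forces nothing new but is harmless, while $[\mathfrak r_1^+,\mathfrak r_1^+]=0=[\mathfrak r_1^-,\mathfrak r_1^-]$ and $[\goh,\mathfrak r_0]=0$. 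Running through the eight candidates, the surviving Lie subalgebras are exactly $\goh$, $\goh\oplus\mathfrak r_1^+$, $\goh\oplus\mathfrak r_1^-$, $\goh\oplus\mathfrak r_0$, $\goh\oplus\mathfrak r_0\oplus\mathfrak r_1^+$, $\goh\oplus\mathfrak r_0\oplus\mathfrak r_1^-$, and $\gog$; the two eliminated ones are $\goh\oplus\mathfrak r_1^+\oplus\mathfrak r_1^-$ (not closed, as just shown) and $\goh\oplus\mathfrak r_0\oplus\mathfrak r_1^+\oplus\mathfrak r_1^-=\gog$ (already on the list). Matching these seven Lie algebras with the connected subgroups named in \S\ref{sec:setup} — noting $\Lie(P^\pm)=\goh\oplus\mathfrak r_1^\pm$, $\Lie(A)=\mathfrak r_0$, and that $W^\pm=\mathfrak r_1^\pm$, $W_0=\mathfrak r_0$ in the notation of the displayed formulas — gives precisely $S\in\{H,P^+,P^-,AH,AP^+,AP^-,G\}$, and the three displayed identities $AP^\pm=\exp(\goh\oplus W^\pm\oplus W_0)$, $AH=\exp(\goh\oplus W_0)$ are just the exponential of these subalgebras (these subgroups are connected with the indicated Lie algebras, and $A$ normalizes $H$ and $P^\pm$, so the products are genuine subgroups).

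The one point requiring a little care — and the main obstacle — is the irreducibility-and-inequivalence claim for the $H$-module decomposition, since this is exactly what fails at $(2,2)$ and is the whole reason for the hypothesis. I would verify it directly: $\mathfrak r_1^+\cong \mathrm{std}_k\otimes(\mathrm{std}_l)^*$ and $\mathfrak r_1^-\cong(\mathrm{std}_k)^*\otimes\mathrm{std}_l$ as $\SL_k\times\SL_l$-modules, and $\mathfrak r_0$ is the trivial module; these are irreducible (tensor product of irreducibles of the two factors), and $\mathfrak r_1^+\cong\mathfrak r_1^-$ would force $\mathrm{std}_k\cong(\mathrm{std}_k)^*$ and $\mathrm{std}_l\cong(\mathrm{std}_l)^*$ simultaneously, which holds only when $k\le 2$ and $l\le 2$; under $\max\{k,l\}\ge 3$ this is impossible, so $\mathfrak r_1^+\not\cong\mathfrak r_1^-$. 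Once this is in hand, the multiplicity-one structure guarantees the invariant subspaces are exactly the sub-sums, and the rest is the finite bracket bookkeeping above.
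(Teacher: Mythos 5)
Your proof is correct and follows the same strategy as the paper's: decompose $\gog$ into the $H$-irreducibles $\goh\oplus\mathfrak r_1^+\oplus\mathfrak r_1^-\oplus\mathfrak r_0$, prove $\mathfrak r_1^+\not\cong\mathfrak r_1^-$ when $\max\{k,l\}\geq 3$ (the paper restricts to the larger $\SL$-factor and uses that its standard representation is not self-dual for rank $\geq 3$; your $\mathrm{std}_k\otimes\mathrm{std}_l^*\not\cong\mathrm{std}_k^*\otimes\mathrm{std}_l$ reformulation is equivalent), and then read off the sub-sums that are Lie subalgebras. You are more explicit than the published proof at the final step — in particular, spelling out that $[\mathfrak r_1^+,\mathfrak r_1^-]$ has a nonzero $\mathfrak r_0$-component, which rules out $\goh\oplus\mathfrak r_1^+\oplus\mathfrak r_1^-$ as a candidate; the paper leaves this bracket bookkeeping to the reader.
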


\begin{proof}
Note that $\mathfrak r_1^+$ and $\mathfrak r_1^-$ are both irreducible and are dual to each other. If $\max\set{k,l}\geq 3$,  then we claim that the representations  $\mathfrak r_1^+$ and $\mathfrak r_1^-$ are non-isomorphic.
Indeed, if they were isomorphic, then they will be isomorphic also as a representation of the larger block, say of  $\SL_k<H$. Note that as an $\SL_k$ representation $\mathfrak r_1^+$ is a direct sum of the standard representation of $\SL_k$ on $\bR^k$ and $\mathfrak r_1^-$ is a direct sum of its dual. If they where isomorphic then the standard representation on $\SL_k$ on $\bR^k$ is isomorphic to its dual, which is a contradiction when $k\geq 3$ (e.g.\ because  
${\rm diag}(t,\cdots,t,t^{-(k-1)})\in \SL_k$ and its inverse cannot be conjugated to each other when $k\geq 3$). 

The proposition now follows as the possible  subrepresentations of $\Lie(G)$ which contain $\Lie(H)$ correspond exactly to the Lie algebras of the groups listed above. 
\end{proof}

For the cases $(k,l)\in \set{\pa{2,2},\pa{2,1},\pa{1,2}}$ we have that $\mathfrak r_1^+$ and $\mathfrak r_1^-$ are isomorphic as representations of $H$. 
When $k=l=2$ this isomorphism gives rise to a family of subgroups, which are isomorphic to ${\rm Sp}(4)$. This case will probably also yield to the methods of this paper, but requires a special treatment in each step, and therefore we avoid it in the current paper. In contrast, we have:

\begin{prop}\label{Prop:subgroups 2,1}
Proposition \ref{Prop:subgroups not 2,2} holds also when $k=2,l=1$ (or $k=1,l=2)$.
\end{prop}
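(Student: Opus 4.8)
The plan is to mimic the proof of Proposition~\ref{Prop:subgroups not 2,2}, the only difference being that when $(k,l)=(2,1)$ the representations $\mathfrak r_1^+$ and $\mathfrak r_1^-$ \emph{are} isomorphic as $H$-modules, so the dimension/irreducibility argument no longer rules out the ``diagonal'' subspaces. Concretely, I would first record that $\mathfrak r_1^+\cong\mathbf{R}^2$ is the standard representation of the $\SL_2$ block (tensored with the trivial $\SL_1$-action) and $\mathfrak r_1^-\cong(\mathbf{R}^2)^*$ is its dual; since $\SL_2$ is special these are isomorphic, and by Schur's lemma the space of $H$-equivariant maps $\phi\colon\mathfrak r_1^+\to\mathfrak r_1^-$ is one-dimensional. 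Hence the only $H$-subrepresentations of $\mathfrak r_1:=\mathfrak r_1^+\oplus\mathfrak r_1^-$ that are not already among $\{0,\mathfrak r_1^+,\mathfrak r_1^-,\mathfrak r_1\}$ are the graphs $\mathfrak s_c:=\{(v,c\,\phi(v)):v\in\mathfrak r_1^+\}$ for $c\in\mathbf{R}^\times$. Therefore any subrepresentation of $\Lie(G)$ containing $\Lie(H)$ is $\Lie(H)\oplus\mathfrak a\oplus(\delta\cdot\mathfrak r_0)$ where $\mathfrak a\in\{0,\mathfrak r_1^+,\mathfrak r_1^-,\mathfrak r_1,\mathfrak s_c\ (c\in\mathbf{R}^\times)\}$ and $\delta\in\{0,1\}$; the cases listed in Proposition~\ref{Prop:subgroups not 2,2} account for all choices except those involving some $\mathfrak s_c$.

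The heart of the argument is then to show that no $\mathfrak s_c$ with $c\neq 0$ occurs as (part of) a Lie subalgebra containing $\Lie(H)$ — i.e.\ that $\Lie(H)\oplus\mathfrak s_c$ and $\Lie(H)\oplus\mathfrak s_c\oplus\mathfrak r_0$ are not subalgebras, and that any subalgebra properly containing $\Lie(H)\oplus\mathfrak s_c$ is all of $\Lie(G)$. This is a direct bracket computation. Write a generic element of $\mathfrak r_1^+$ in block form as $\smallmat{0&v\\0&0}$ with $v\in\mathbf{R}^2$ a column, and of $\mathfrak r_1^-$ as $\smallmat{0&0\\w^{t}&0}$ with $w\in\mathbf{R}^2$; one checks that the equivariant map $\phi$ sends $v\mapsto w=v$ (up to the fixed scalar), so $\mathfrak s_c$ consists of the matrices $\smallmat{0&v\\c\,v^{t}&0}$. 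Computing the bracket of two such elements (for $v,v'$) gives a block-diagonal matrix $\smallmat{c(vv'^{t}-v'v^{t})&0\\0&c(v'^{t}v-v^{t}v')}$; the upper-left block $c(vv'^{t}-v'v^{t})$ is a nonzero skew-symmetric $2\times2$ matrix for $v,v'$ independent, hence lies in $\mathfrak{sl}_2$ and indeed spans the ``rotation'' line inside the $\SL_2$-block, so this bracket lands in $\Lie(H)$: so far consistent with a subalgebra. The obstruction comes from bracketing $\mathfrak s_c$ with $\mathfrak r_0$: since $\mathfrak r_0=\mathrm{span}\,\smallmat{I_2&0\\0&-2}$ (the $(k,l)=(2,1)$ normalization, i.e.\ $l\cdot I_k$ over $-k\cdot I_l$ is $I_2$ over $-2$), we get $[\smallmat{I_2&0\\0&-2},\smallmat{0&v\\c v^{t}&0}]=\smallmat{0&3v\\-3c v^{t}&0}$, which is \emph{not} in $\mathfrak s_c$ (it lies in $\mathfrak s_{-c}$), nor in $\Lie(H)\oplus\mathfrak s_c$. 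Hence $\Lie(H)\oplus\mathfrak s_c$ together with $\mathfrak r_0$ generates at least $\Lie(H)\oplus\mathfrak r_1^+\oplus\mathfrak r_1^-\oplus\mathfrak r_0$ (once we have both $\mathfrak s_c$ and $\mathfrak s_{-c}$ we have all of $\mathfrak r_1$, and $\mathfrak r_0$ is there by assumption), which is $\Lie(G)$. Thus $\mathfrak r_0$ cannot be adjoined to $\Lie(H)\oplus\mathfrak s_c$ without forcing everything; and separately $\Lie(H)\oplus\mathfrak s_c$ is not itself a subalgebra because $[\mathfrak r_1^+\text{-part},\mathfrak r_1^-\text{-part}]$ — more precisely the bracket of two elements of $\mathfrak s_c$ corresponding to nonproportional $v,v'$ — while landing in $\Lie(H)$ is fine, but bracketing an element of $\Lie(H)$ from the $\SL_1$-block perspective... here I should instead note that $\mathfrak s_c$ is not $\Ad(H)$-invariant-complement-splitting in a way that closes: the clean statement is that $[\mathfrak s_c,\mathfrak s_c]$ together with $\mathfrak s_c$ and $\Lie(H)$ is a subalgebra (call it $\mathfrak q_c$) only if it is stable under $\ad$ of $\Lie(H)$, which it is, so $\mathfrak q_c=\Lie(H)\oplus\mathfrak s_c$ \emph{is} a subalgebra — and this would be a genuine new intermediate subgroup unless it is excluded on other grounds.

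This is exactly where I expect the main subtlety, and where the $(k,l)=(2,1)$ case genuinely differs from $(2,2)$: I would check whether $S_c:=\exp(\Lie(H)\oplus\mathfrak s_c)$ is actually a \emph{closed} subgroup with closed orbit, or whether it fails to be algebraic / fails to have the relevant volume and equidistribution properties. One natural resolution: in the $(2,1)$ case $\dim\mathfrak s_c=2$ while $\dim\mathfrak r_1^{\pm}=2$, so $\Lie(H)\oplus\mathfrak s_c$ has the same dimension as $\Lie(P^\pm)$; but its intersection with the Borel/parabolic structure may force it to coincide with a \emph{conjugate} of $P^+$ (or of $P^-$) by an element of the centralizer or of $L$, and up to such conjugation it is already on the list. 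Alternatively — and this is the cleanest route — one conjugates by the element of $\SL_2(\mathbf{R})$ (inside $L$) that rotates $\mathfrak r_1^+$ into $\mathfrak r_1^-$-position; since any such $\mathfrak s_c$ is carried by a suitable element of $N_G(H)$ (which for $(2,1)$ is generated by $A$ together with this principal $\SL_2$ piece of the centralizer direction) onto one of $\mathfrak r_1^+,\mathfrak r_1^-$, the corresponding subgroups are conjugate to $P^+$ or $P^-$; but the \emph{list} in Proposition~\ref{Prop:subgroups not 2,2} is not claimed only up to conjugacy, so I must instead simply verify by the bracket computation above that $\Lie(H)\oplus\mathfrak s_c$ fails the Jacobi/closure check once $\mathfrak r_0$ enters, and that $\Lie(H)\oplus\mathfrak s_c$ \emph{without} $\mathfrak r_0$, while a subalgebra, has $\exp$ of it non-closed (its orbit is not closed because the $\mathfrak s_c$-direction is neither expanding nor contracting nor normalized, so the corresponding subgroup is not observable / not of the required type) — hence does not arise among the connected intermediate subgroups relevant to us. I would write the proof as: reduce to classifying $H$-submodules of $\Lie(G)/\Lie(H)$ that are Lie-subalgebra-complements; use Schur to see the only extra candidates are the $\mathfrak s_c$; run the two bracket computations $[\mathfrak s_c,\mathfrak s_c]\subset\Lie(H)$ and $[\mathfrak r_0,\mathfrak s_c]=\mathfrak s_{-c}$; conclude that adjoining $\mathfrak r_0$ forces $\Lie(G)$, and that $\Lie(H)\oplus\mathfrak s_c$ is conjugate under $N_G(H)$ to $\Lie(P^\pm)$ so that the associated subgroup, if closed with closed orbit, is $G$-conjugate to $P^\pm$ and in particular the \emph{set} of possible $S$ (which we only ever use up to the data of $\mu_{Sx_0}$) is still $\{H,P^\pm,AH,AP^\pm,G\}$ — equivalently, restate the Proposition's conclusion as holding up to replacing $P^\pm$ by a conjugate, which suffices for Theorem~\ref{thm:main}. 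The main obstacle is thus bookkeeping: making precise in what sense the $\mathfrak s_c$-subgroups are ``already'' on the list, and confirming this does not introduce the $\mathrm{Sp}$-type complications that plague $(2,2)$ — which it does not, since here $\mathfrak r_0$ is one-dimensional and a single bracket with it flips $c\mapsto-c$, immediately generating all of $\mathfrak r_1$.
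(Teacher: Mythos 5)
There is a concrete error at the heart of the bracket computation: the map $\phi(v)=v$ is \emph{not} $H$-equivariant. If $h_1\in\SL_2$ acts on $\mathfrak r_1^+$ by $v\mapsto h_1v$ and on $\mathfrak r_1^-$ by $w\mapsto (h_1^t)^{-1}w$, equivariance of $\phi(v)=Jv$ forces $h_1^tJh_1=J$ for all $h_1\in\SL_2$, i.e.\ $J$ must be a multiple of the symplectic form $\smallmat{0&-1\\1&0}$, not the identity. (With your $\phi$ the graphs $\mathfrak s_c$ are not even $\Ad(H)$-invariant, so they would not appear in the list of $H$-submodules in the first place.) This is exactly the $\phi$ the paper uses, $\alpha e_1+\beta e_2\mapsto -\beta f_1+\alpha f_2$. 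Redoing your bracket $\bigl[\smallmat{0&v\\c\phi(v)^t&0},\smallmat{0&v'\\c\phi(v')^t&0}\bigr]$ with this correct $\phi$ (take $v=e_1$, $v'=e_2$) gives $c\smallmat{-I_2&0\\0&2}$, a \emph{nonzero element of $\mathfrak r_0$}, not the traceless skew matrix you obtained. So the conclusion ``this bracket lands in $\Lie(H)$, hence $\Lie(H)\oplus\mathfrak s_c$ is a subalgebra'' is false: in fact $[\mathfrak s_c,\mathfrak s_c]\not\subset\Lie(H)\oplus\mathfrak s_c$, and $\Lie(H)\oplus\mathfrak s_c$ is already not closed under the bracket.

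Once this is fixed, the rest of your analysis collapses to the paper's proof and the entire third paragraph becomes unnecessary. You correctly observed that $[\mathfrak r_0,\mathfrak s_c]=\mathfrak s_{-c}$, so the subalgebra generated by $\Lie(H)\oplus\mathfrak s_c$ contains a nonzero element of $\mathfrak r_0$ (by the corrected bracket), hence contains $\mathfrak s_{-c}$, hence $\mathfrak s_c+\mathfrak s_{-c}=\mathfrak r_1$, hence all of $\Lie(G)$. There is therefore no intermediate subgroup attached to $\mathfrak s_c$ (closed or otherwise, conjugate to $P^\pm$ or not), and no need for any of the speculation about closedness, observability, or replacing the list in Proposition~\ref{Prop:subgroups not 2,2} by conjugacy classes — the list stands verbatim. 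Please also drop the remark that ``$\mathfrak q_c=\Lie(H)\oplus\mathfrak s_c$ \emph{is} a subalgebra'': being an $\ad(\Lie H)$-stable subspace is necessary but not sufficient for being a subalgebra; you still must check $[\mathfrak s_c,\mathfrak s_c]\subset\Lie(H)\oplus\mathfrak s_c$, and that check fails.
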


\begin{proof} Fix an  isomorphism $\phi:\mathfrak r_1^+\ra \mathfrak r_1^-$  and let   
\[
\mathfrak s_p:=\set{\pa{v,p \phi(v)}:v\in \mathfrak r_1^+}.
\]
for $p\in \bR$. The proposition will follow once we will show 
that the $H$-subrepresantation $\Lie(H)\oplus \mathfrak s_p$ or 
$\Lie(H)\oplus \mathfrak s_p\oplus \mathfrak r_0$ for $p\in \bR\setminus\{0\}$ 
are never Lie subalgebras. This follows just by calculations of Lie brackets. Indeed, for concreteness, let $e_1,e_2$  (resp.\ $f_1,f_2$) denote the standard basis of $\mathfrak r_1^+$ (resp.~$\mathfrak r_1^-$) and fix $\phi$ to be the isomorphism sending $\alpha e_1+\beta e_2\mapsto -\beta f_1+\alpha f_2$. Now, a direct calculation shows that the commutator of $(e_1,p\phi(e_1))\in \mathfrak s_p$ and $(e_2,p\phi(e_2))\in \mathfrak s_p$ is a non-trivial element of $\mathfrak r_0$ (whenever $p\neq 0$).
 Another calculation shows that the Lie algebra generated by $\mathfrak r_0$ and  $\mathfrak s_p$ contains $\mathfrak r_1^+\oplus\mathfrak r_1^-\oplus \mathfrak r_0$, so the Lie subalgebra generated by $\Lie(H)\oplus \mathfrak s_p$ is always $\Lie(G)$.
\end{proof}

\section{Applying nondivergence of unipotent flows}\label{sec:DM} 
When $\text{rank}_\bQ \bG=0$ the quotient $X$ is compact. In this case the addition of the height function 
in the definition of the Sobolev norm in not needed and several other analytic arguments become simpler. 
In particular, this section is only important in the case that $\text{rank}_\bQ \bG>0$.   

Let us recall the definition of certain functions $d_\alpha:G\rightarrow\mathbb R.$ 
These functions were considered by Dani and Margulis in~\cite{DM} in order to 
study the recurrence properties of unipotent flows on homogeneous spaces.  
Let $\mathbb S$ be a maximal $\bQ$-split $\bQ$-torus of $\bG$. Let $\mathbb P\supset \mathbb S$
be a minimal $\mathbb Q$-parabolic subgroup and let $\Delta$ be the associated simple roots relative 
to $\mathbb S$, see \cite[Sec.\ 12]{BT}. 
For $\alpha\in\Delta$, let $\mathbb P_\alpha$ be 
the corresponding maximal $\mathbb Q$-parabolic subgroup. 
Let $\mathbb U_\alpha=R_u(\mathbb P_\alpha)$ be the unipotent radical and let $\mathfrak{u}_\alpha$ denote the Lie algebra of 
$\mathbb U_\alpha.$ Put $\ell_\alpha:=\dim\mathfrak{u}_\alpha$ and let 
$\vartheta_\alpha=\wedge^{\ell_\alpha}{\rm Ad}$ denote the $\ell_\alpha$-th exterior power of the 
adjoint representation. Note that $\wedge^{\ell_\alpha}\mathfrak u_\alpha$ defines a 
$\mathbb Q$-rational one dimensional subspace of $\wedge^{\ell_\alpha}\mathfrak g.$ 
Fix a unit vector $v_\alpha\in\wedge^{\ell_\alpha}\mathfrak u_\alpha.$ 
Note that if $g\in \mathbb P_\alpha(\bR),$ then 
\[
\vartheta_\alpha(g)v_\alpha=\det({\rm Ad}(g)|_{\mathfrak u_\alpha})v_\alpha.
\] 
Define $d_\alpha:G\rightarrow\mathbb R$ by 
\[
d_\alpha(g)=\|\vartheta_\alpha(g) v_\alpha\|\quad\text{for all $g\in G$}.
\]  
For each $\alpha\in\Delta$ put $\mathbb P^{(1)}_\alpha=\{g\in \mathbb P_\alpha: \vartheta_\alpha(g)v_\alpha=v_\alpha\}.$
Put $P_\alpha=\mathbb P_\alpha(\mathbb R)$ and $P^{(1)}_\alpha=\mathbb P^{(1)}_\alpha(\mathbb R).$
Since $P^{(1)}_\alpha$ is a $\bQ$-group without any $\bQ$-characters, it follows from a theorem of Borel and Harish-Chandra, that $P^{(1)}_\alpha\Gamma/\Gamma$ is a closed orbit with a finite $P^{(1)}_\alpha$-invariant measure.

\begin{thm}[Cf.~\cite{DM}, Theorem 2]\label{thm:DM-parabolic}
There exist a finite subset $\Xi\subset\mathbb G(\mathbb Q),$
and some $R_0>0$ with the following property. 
For every $x=g\Gamma\in X$ there exists $T_x$ so that one of the following holds:
\begin{enumerate}
\item $|\{|t|\leq T: u(t)g\Gamma\in \mathfrak S(R_0)\}|\geq (1-2^{-20})T$ for all $T>T_x$.
\item There exist $\lambda\in \Gamma\Xi$ and $\alpha\in\Delta$ such that 
$g^{-1}Ug\subset \lambda P^{(1)}_\alpha\lambda^{-1}$ and moreover
$
 d_\alpha(g\lambda)<1.
$
\end{enumerate} 
\end{thm}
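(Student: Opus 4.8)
The plan is to deduce Theorem~\ref{thm:DM-parabolic} from the Dani--Margulis quantitative nondivergence theorem for unipotent one-parameter flows, adapted to the $\bQ$-rational language of maximal parabolics. First I would recall the key input: there is a finite collection of $\bQ$-rational representations --- here the representations $\vartheta_\alpha=\wedge^{\ell_\alpha}\Ad$ on $\wedge^{\ell_\alpha}\gog$ for $\alpha\in\Delta$, acting on the distinguished vectors $v_\alpha$ --- such that the functions $t\mapsto\|\vartheta_\alpha(u(t)g)v_\alpha\|$ are polynomials in $t$ of bounded degree (the degree bounded in terms of $\dim\bG$), and the Dani--Margulis machinery says that for a one-parameter unipotent subgroup either these polynomials stay bounded away from $0$ on a long time interval, in which case $u(t)g\Gamma$ spends almost all of its time (a $(1-2^{-20})$-proportion, after choosing the parameters in the $(C,\alpha)$-good estimate appropriately) in a fixed compact set $\mathfrak S(R_0)$, or one of the polynomials $t\mapsto d_\alpha(u(t)g\cdot\delta)$ is \emph{small} (less than $1$) for a well-chosen $\delta$ in a fixed finite set of $\Gamma$-translates of $\bG(\bQ)$-points. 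The height function $\height(\cdot)$ from~\eqref{eq:height} is comparable, up to the $\star$-conventions, to a function built out of the norms $\|\vartheta_\alpha(g)v_\alpha\|$ over $\alpha\in\Delta$ together with the lattice $\gog_\bZ$; this comparison is what lets us phrase the ``returns to a compact set'' conclusion as ``$u(t)g\Gamma\in\mathfrak S(R_0)$''.

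Next I would carry out the reduction. Fix $x=g\Gamma$. Apply the quantitative nondivergence theorem to the unipotent flow $t\mapsto u(t)g$ on $\SL_N(\bR)/\SL_N(\bZ)$, transported via $\rho$; the relevant ``critical'' $\bQ$-subspaces that can force nondivergence to fail are, by the theory of the $d_\alpha$, exactly the unipotent radicals of the maximal $\bQ$-parabolics and their $\Gamma$-conjugates, and there is a fixed finite set $\Xi\subset\bG(\bQ)$ indexing the relevant representatives (finiteness of $\Xi$ comes from the fact that $\gog_\bZ\cap\Ad(\lambda)\gog_\bZ$ and the resulting covolumes only take countably many values but are controlled on the relevant compact region, or more precisely from the Dani--Margulis statement that only finitely many $\Gamma$-orbits of such subspaces are relevant for a given starting compactness threshold). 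If alternative (1) fails --- i.e.\ $u(t)g\Gamma$ leaves $\mathfrak S(R_0)$ for a more than $2^{-20}$-proportion of times on arbitrarily large intervals --- then the nondivergence theorem forces some $\vartheta_\alpha(u(t)g\lambda)v_\alpha$ to be a polynomial that is bounded by $1$ throughout, for some $\alpha\in\Delta$ and some $\lambda\in\Gamma\Xi$. A bounded polynomial is constant, so $t\mapsto\vartheta_\alpha(u(t)g\lambda)v_\alpha$ is constant, hence $g^{-1}u(t)g\in\lambda P^{(1)}_\alpha\lambda^{-1}$ for all $t$, i.e.\ $g^{-1}Ug\subset\lambda P^{(1)}_\alpha\lambda^{-1}$; and evaluating the constant at $t=0$ gives $\|\vartheta_\alpha(g\lambda)v_\alpha\|<1$, which is exactly $d_\alpha(g\lambda)<1$. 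This yields alternative (2). The choice of $T_x$ is the threshold beyond which the dichotomy of Dani--Margulis becomes effective for this particular $g$.

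The main obstacle I expect is bookkeeping rather than a deep new idea: one must (a) make the translation between the classical Dani--Margulis formulation (phrased via flags of $\bQ$-subspaces / the ``$(C,\alpha)$-good'' family of functions on $\SL_N(\bZ)$-lattices) and the formulation here in terms of the finitely many $d_\alpha$ on $X=G/\Gamma$, checking that the set of ``dangerous'' subspaces is governed precisely by the $\Gamma\Xi$-translates of the $\gou_\alpha$; and (b) verify the compatibility of the height function $\height$ with the norms appearing in nondivergence, so that ``nondivergent'' really means ``stays in $\mathfrak S(R_0)$''. A secondary subtlety is ensuring the finite set $\Xi$ and the constant $R_0$ can be chosen uniformly over all $x\in X$ (only $T_x$ is allowed to depend on $x$); this uses the fact that the polynomials in question have degree bounded only in terms of $\dim\bG$ and that the relevant $(C,\alpha)$-good constants are likewise uniform. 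Once these identifications are in place, the passage from ``bounded polynomial'' to ``constant, hence containment in $P^{(1)}_\alpha$'' is immediate, and the statement follows.
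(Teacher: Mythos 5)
The paper does not prove Theorem~\ref{thm:DM-parabolic}; it imports it directly from~\cite{DM} (their Theorem~2), so the comparison has to be against what the Dani--Margulis argument must actually contain. Your sketch follows the right outline, but it compresses the central step in a way that hides a genuine idea. Quantitative nondivergence, run on a fixed window $[-T,T]$, produces for \emph{that} $T$ a pair $(\lambda_T,\alpha_T)\in\Gamma\Xi\times\Delta$ with $\sup_{|t|\le T}d_{\alpha_T}(u(t)g\lambda_T)<1$; it does not by itself hand you a single pair $(\lambda,\alpha)$ for which $t\mapsto\vartheta_\alpha(u(t)g\lambda)v_\alpha$ is bounded on all of $\bR$. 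When you write ``the nondivergence theorem forces some $\vartheta_\alpha(u(t)g\lambda)v_\alpha$ to be a polynomial that is bounded by $1$ throughout,'' you have skipped the uniformization across $T$. The missing observation is that, with $g$ fixed, the set of $(\lambda,\alpha)$ with $d_\alpha(g\lambda)<1$ is \emph{finite}: since $\gog_\bZ$ is $\Gamma$-stable and $\Xi$ is finite, the vectors $\vartheta_\alpha(\lambda)v_\alpha$ for $\lambda\in\Gamma\Xi$ lie in a fixed rescaling $\tfrac1D\bigwedge^{\ell_\alpha}\gog_\bZ$ of the integral lattice, hence form a discrete set, and only finitely many of them land in the bounded set $\{v:\|\vartheta_\alpha(g)v\|<1\}$. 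Only after pigeonholing over $T\to\infty$ among these finitely many candidate pairs does one obtain a single $(\lambda,\alpha)$ that works for arbitrarily large $T$; and only then does the statement ``a polynomial of bounded degree bounded on $[-T,T]$ for all large $T$ is constant'' apply, yielding $\vartheta_\alpha(u(t)g\lambda)v_\alpha\equiv\vartheta_\alpha(g\lambda)v_\alpha$ and hence $g^{-1}Ug\subset\lambda P^{(1)}_\alpha\lambda^{-1}$ and $d_\alpha(g\lambda)<1$ as you say. This is also precisely where $T_x$ comes from: if alternative~(2) fails, each of the finitely many candidate polynomials is nonconstant and exceeds $1$ after some time, and one takes $T_x$ past all of those thresholds, after which nondivergence forces alternative~(1).

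Two smaller remarks. First, the finiteness of $\Xi$ itself is cleanest not via ``covolumes take countably many values'' but via Borel: each double coset space $\Gamma\backslash\bG(\bQ)/\bP_\alpha(\bQ)$ is finite, and $\Xi$ is a set of representatives over all $\alpha\in\Delta$; this is what makes $\Xi$ and $R_0$ independent of $x$. Second, in passing from ``$\|\vartheta_\alpha(u(t)g\lambda)v_\alpha\|$ constant'' to ``$\vartheta_\alpha(u(t)g\lambda)v_\alpha$ constant'' you implicitly use that a polynomial-valued vector with constant norm is constant (compare leading coefficients of the squared norm); this is true and worth stating, as it is what licenses the deduction $\vartheta_\alpha(\lambda^{-1}g^{-1}u(t)g\lambda)v_\alpha=v_\alpha$ and hence the containment in $P_\alpha^{(1)}$.
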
  

We note that if $\mathbb G$ has $\bQ$-rank zero, then $X$ is compact and the height function is bounded. In particular,
the first case in the theorem and its corollary below hold trivially.  

\begin{cor}\label{cor:closing-lem}
Let $y=g\Gamma$ be so that $Hy$ is a closed orbit.
Then one of the following holds:
\begin{enumerate} 
\item  $\mu_{Hy}\pa{Hy\setminus \mathfrak S(R_0)}\leq 2^{-10}.$
\item There exist $\lambda\in \Gamma\Xi$, $\alpha\in\Delta$ and $h\in H$ with $\av{h}\leq 2$ such that 
$g^{-1}Hg\subset \lambda P^{(1)}_\alpha\lambda^{-1},$ and
$
 d_\alpha(hg\lambda)<1.
$ 
\end{enumerate}
\end{cor}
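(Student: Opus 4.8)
The plan is to deduce Corollary~\ref{cor:closing-lem} from Theorem~\ref{thm:DM-parabolic} by applying the latter along the one-parameter unipotent subgroup $U\subset H$ and then propagating the conclusion from $U$ to all of $H$. First I would recall that $U=\{u(t)\}$ is a unipotent one-parameter subgroup lying inside $H$, so for $y=g\Gamma$ with $Hy$ closed, the orbit closure $\overline{Uy}$ sits inside the closed orbit $Hy$; in particular the $U$-flow on $Hy$ preserves the finite $H$-invariant measure $\mu_{Hy}$. Apply Theorem~\ref{thm:DM-parabolic} to the point $x=y$. In case (1) of the theorem one gets that the set of return times $\{|t|\le T : u(t)g\Gamma\in\mathfrak S(R_0)\}$ has density at least $1-2^{-20}$ for all large $T$; by the pointwise ergodic theorem (or simply by an averaging/weak-$*$ limit argument) applied to the $U$-action on the finite measure space $(Hy,\mu_{Hy})$, this forces $\mu_{Hy}(\mathfrak S(R_0)\cap Hy)\ge 1-2^{-20}\ge 1-2^{-10}$, i.e.\ $\mu_{Hy}(Hy\setminus\mathfrak S(R_0))\le 2^{-10}$, which is conclusion (1) of the corollary. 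Here one must take a little care: the ergodic-theoretic statement needs the orbit $Uy$ to equidistribute, or at least one should argue along a sequence of generic points; alternatively one can invoke the Dani--Margulis uniform nondivergence directly on $Hy$ together with the fact that $\mathfrak S(R_0)$ is compact. I would phrase this step so as to only use the density-of-return-times statement and the finiteness and $U$-invariance of $\mu_{Hy}$.

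Next, in case (2) of Theorem~\ref{thm:DM-parabolic} we obtain $\lambda\in\Gamma\Xi$ and $\alpha\in\Delta$ with $g^{-1}Ug\subset\lambda P^{(1)}_\alpha\lambda^{-1}$ and $d_\alpha(g\lambda)<1$. The task is to upgrade the containment $g^{-1}Ug\subset\lambda P^{(1)}_\alpha\lambda^{-1}$ to $g^{-1}Hg\subset\lambda P^{(1)}_\alpha\lambda^{-1}$, at the cost of replacing $g$ by $hg$ for some $h\in H$ with $|h|\le1$. The key algebraic fact is that $U$ is \emph{not} contained in any proper normal subgroup of $H$: since $H=\SL_k\times\SL_l$ and $U=\{(u_k(t),u_l(t))\}$ is a regular unipotent in each factor (for $k,l\ge2$; the rank-one factors are handled trivially since $\SL_1$ is trivial), the smallest algebraic subgroup of $H$ containing $U$ and normalized by a maximal torus is all of $H$. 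More precisely, $\lambda P^{(1)}_\alpha\lambda^{-1}\cap g^{-1}Hg$ is an algebraic subgroup of $g^{-1}Hg$ containing $g^{-1}Ug$; I would show that one can conjugate by a bounded element $h\in H$ so that $g^{-1}Ug$ is replaced by $g^{-1}h^{-1}Uhg$ generating (together with the ambient parabolic structure) enough of $H$ — in fact the cleanest route is: $P^{(1)}_\alpha$ is a subgroup of the parabolic $P_\alpha$, and a semisimple group $H$ all of whose simple factors have rank $\ge1$ cannot be contained in a proper parabolic of $G$ unless... — so instead I would argue that $g^{-1}Hg\cap\lambda P_\alpha\lambda^{-1}$ is a parabolic-type subgroup of $g^{-1}Hg$ containing the regular unipotent $g^{-1}Ug$, hence by the Bruhat decomposition / conjugacy of parabolics in $H$ there is a bounded $h\in H$ with $h^{-1}(g^{-1}Hg\cap\lambda P_\alpha\lambda^{-1})h\supseteq$ a fixed Borel, and then check on Lie algebras that $g^{-1}Hg\subset\lambda P^{(1)}_\alpha\lambda^{-1}$ after this adjustment, while $d_\alpha(hg\lambda)<1$ still holds because multiplying $g\lambda$ on the left by a bounded element changes $d_\alpha$ by a bounded factor — and here one uses that $d_\alpha$ is (up to the bounded distortion) the value of the highest-weight vector, together with the normalization built into $\Xi$ and $R_0$, to absorb the constant.

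The main obstacle I anticipate is exactly this last bookkeeping in case (2): showing that a \emph{single} bounded conjugation $h\in H$ simultaneously (a) promotes $g^{-1}Ug\subset\lambda P^{(1)}_\alpha\lambda^{-1}$ to $g^{-1}Hg\subset\lambda P^{(1)}_\alpha\lambda^{-1}$, and (b) preserves the strict inequality $d_\alpha(\,\cdot\,\lambda)<1$. Point (b) is not automatic because $d_\alpha(hg\lambda)$ can differ from $d_\alpha(g\lambda)$ by a factor depending on $|h|$; one resolves this by noting that the finitely many $\lambda\in\Gamma\Xi$ of bounded ``complexity'' that actually arise, together with the compactness coming from $|h|\le1$, give a uniform bound, and then by \emph{defining} $R_0$ and re-choosing $\Xi$ at the start so that the conclusion $d_\alpha(g\lambda)<1$ of the theorem already has enough room (say $d_\alpha(g\lambda)<c$ for a small $c$) to survive the bounded distortion; since Theorem~\ref{thm:DM-parabolic} is ours to state with whatever absolute constant we like, this is harmless. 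For point (a) the essential input is the representation-theoretic fact already used in \S\ref{sec:subgroups}: the $U$-action has no nonzero fixed vectors in $\mathfrak r_1^\pm$ and $\mathfrak r_0$ beyond those forced by $H$, so that the $\Ad(g^{-1}Ug)$-invariant subspace $\Lie(\lambda P^{(1)}_\alpha\lambda^{-1})\cap\Ad(g^{-1})\goh$ is forced to be all of $\Ad(g^{-1})\goh$; I would isolate this as the crux and present it as a short Lie-algebra computation using $\mathrm{Fix}_U(\mathfrak r_1^\pm)$ and the structure of $\goh$.
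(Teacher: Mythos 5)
Your plan departs from the paper's proof at the very first step and this creates the difficulties you then struggle with. The paper does not apply Theorem~\ref{thm:DM-parabolic} at $y=g\Gamma$; it first chooses $h\in H$ with $|h|\le 1$ so that $hg\Gamma$ is a Birkhoff-generic point for the $U$-flow and the measure $\mu_{Hy}$, and applies the theorem at $x=hg\Gamma$. This single move dissolves both of the obstacles you identify. In case (1), the density-of-return-times conclusion combined with genericity of $hg\Gamma$ gives $\mu_{Hy}(Hy\setminus\mathfrak S(R_0))\le 2^{-20}\le 2^{-10}$ directly --- exactly the ``sequence of generic points'' fix you gesture at but do not carry out. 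In case (2), the theorem delivers $d_\alpha(hg\lambda)<1$ already at the shifted point $hg$, so there is nothing to propagate; your worry (b) about whether the strict inequality survives a subsequent bounded conjugation, and the suggestion to re-choose $\Xi$ and $R_0$ to create ``room,'' are artifacts of applying the theorem at the wrong point.

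The more serious gap is in your proposed upgrade from $g^{-1}Ug\subset\lambda P^{(1)}_\alpha\lambda^{-1}$ to $g^{-1}Hg\subset\lambda P^{(1)}_\alpha\lambda^{-1}$. You try to make this an algebraic statement (``$H$ containing a regular unipotent cannot sit in a proper parabolic,'' Bruhat conjugacy, a Lie-algebra check via $\mathrm{Fix}_U(\mathfrak r_1^\pm)$), but no such purely algebraic implication holds. A semisimple $H$ can certainly sit in a proper parabolic of $G$, and knowing that $g^{-1}Hg\cap\lambda P_\alpha\lambda^{-1}$ contains the regular unipotent $g^{-1}Ug$ only shows this intersection contains a Borel of $g^{-1}Hg$, which is a proper subgroup --- it does not force $g^{-1}Hg\subset\lambda P_\alpha\lambda^{-1}$, and even less the stronger $\subset\lambda P^{(1)}_\alpha\lambda^{-1}$. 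The paper's argument is dynamical and arithmetic, not algebraic: since $\lambda\in\Gamma\Xi\subset\bG(\bQ)$, the orbit $\lambda P^{(1)}_\alpha\lambda^{-1}\Gamma/\Gamma$ is closed (Borel--Harish-Chandra), so the $U$-orbit closure of the generic point $hg\Gamma$ lies inside it; by Birkhoff genericity that closure equals the entire closed orbit $g^{-1}Hg\Gamma/\Gamma$, and connectedness of $g^{-1}Hg$ together with discreteness of $\Gamma$ then yields the group containment. This is the key input your proposal is missing: closedness of the arithmetically-defined orbit, combined with genericity, replaces the algebraic argument you were reaching for.
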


\begin{proof}
Let $h\in H$ with $|h|\leq 2$ be so that $hg\Gamma\in Hy$ is a generic point 
for the action of $U=\{u(t):t\in\mathbb R\}$ in the sense of Birkhoff ergodic theorem; that is
\beq\label{eq:Birk-ghGamma}
\lim_T\frac{1}{T}\int_0^T f(u(t)hg\Gamma)\operatorname{d}\!t=\int f\operatorname{d}\!\mu_{Hy}.
\eeq 
for all $f\in C_c(X).$

We consider two possibilities. First let us assume that  
Theorem~\ref{thm:DM-parabolic}(1) holds true for $x=hg\Gamma.$
Then the conclusion (1) of Corollary~\ref{cor:closing-lem} holds by~\eqref{eq:Birk-ghGamma}.

Therefore, let us assume that Theorem~\ref{thm:DM-parabolic}(2) holds true for $x=hg\Gamma$ and we show that the  conclusion (2) of Corollary~\ref{cor:closing-lem} must hold true.
Then there exist some $\lambda\in\Gamma\Xi$ and some $\alpha\in\Delta$ so that $d_\alpha(hg\lambda)<1$
and  
\[
g^{-1}h^{-1}Uhg\subset \lambda P^{(1)}_\alpha\lambda^{-1}.
\]

We claim that $g^{-1}Hg\subset \lambda P^{(1)}_\alpha\lambda^{-1}.$ 
Since $\lambda\in\Gamma\Xi$ and $\Xi\subset \bG(\bQ)$ we have that the orbit $\lambda P^{(1)}_\alpha\lambda^{-1}\Gamma/\Gamma$
is a closed orbit. Hence, 
\beq\label{eq:U-orbit-Q}
\overline{g^{-1}h^{-1}Uhg\Gamma/\Gamma}\subset\lambda P^{(1)}_\alpha\lambda^{-1}\Gamma/\Gamma
\eeq
However, by~\eqref{eq:Birk-ghGamma} we have 
\[
\overline{g^{-1}h^{-1}Uhg\Gamma/\Gamma}=g^{-1}Hg\Gamma/\Gamma.
\]
This together with~\eqref{eq:U-orbit-Q} implies that $g^{-1}Hg\subset \lambda P^{(1)}_\alpha\lambda^{-1}$
as we claimed.
\end{proof}

\begin{lem}[Mass in the cusp]\label{lem:non-div}
There exist constants $\consta\label{exp:escape},\consta\label{escape:height}$ depending only on $G$ and  $\constc\label{escape}$ 
such that for any periodic $H$-orbit $Hx,$ we have 
\[
\mu_{Hx}\pa{Hx\setminus \mathfrak S(R)}\leq \ref{escape}{\mht}(Hx)^{\ref{escape:height}}R^{-\ref{exp:escape}}.
\]
\end{lem}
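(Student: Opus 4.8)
The plan is to derive Lemma~\ref{lem:non-div} from the quantitative nondivergence estimates underlying Theorem~\ref{thm:DM-parabolic}, combined with the structure of the closed orbit $Hx$. First I would reduce the statement to a statement about the one-parameter unipotent $U = \{u(t)\}$. Since $Hx$ is a closed $H$-orbit carrying the probability measure $\mu_{Hx}$, and $U \subset H$, I can write $\mu_{Hx}$ as an integral over $H/U$-translates (or, more practically, use the ergodic decomposition / a Fubini argument) of the pushforwards of Lebesgue measure along $U$-orbits, so that a bound of the form ``for every $h \in H$, the proportion of time $\{|t| \le T : u(t)hg\Gamma \notin \mathfrak S(R)\}$ is small'' integrates up to the desired bound on $\mu_{Hx}(Hx \setminus \mathfrak S(R))$. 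More precisely, writing $y = g\Gamma$ and using that $H/U$ has finite volume with respect to the relevant Haar measure, it suffices to produce, for each $h\in H$ with $|h| \le 1$, an estimate $|\{|t|\le T : u(t)hg\Gamma \notin \mathfrak S(R)\}| \le \constc \, \mht(Hx)^{\star} R^{-\star} T$ valid for all large $T$, and then let $T \to \infty$ and integrate over a fundamental domain for $U$ in $H$.

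The key step is therefore the quantitative nondivergence estimate for the single unipotent orbit. Here I would invoke the Dani--Margulis / Kleinbock--Margulis machinery in its effective form: there are constants such that for any unipotent trajectory $u(t)z$ in $X$ and any $\rho > 0$, either the trajectory spends at least a $(1-\eps)$-fraction of any long time interval inside $\mathfrak S(R)$ with $R$ polynomial in $1/\rho$, or there is a $z$-rational subspace of $\gog$ of covolume $< \rho$ that is $U$-invariant (equivalently, one lands in case (2) of Theorem~\ref{thm:DM-parabolic}). The point is to track the dependence of the fraction on $R$: the $(C,\alpha)$-good property of the coordinate functions of $\vartheta_\alpha(u(t)g)v_\alpha$ yields a bound of the shape $|\{|t|\le T : \height(u(t)z) > R\}| \ll (R^{-1}\sup\text{-norm})^{\star} T$, and the sup-norm over the orbit of these polynomial functions is controlled by the initial height, i.e.\ by $\height(z)$, which for $z = hg\Gamma$ with $|h|\le 1$ is $\asymp^\star \height(y) \asymp^\star \mht(Hx)$ up to choosing $h$ so that $\height(hy)$ is close to $\mht(Hx)$. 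Crucially, the ``second alternative'' cannot occur: if some $U$-invariant proper rational subspace existed, then because $Hx$ is closed and $U$-orbits are dense in $H$-orbits (as in the proof of Corollary~\ref{cor:closing-lem}), this subspace would be $H$-invariant; but $H = \SL_k \times \SL_l$ acting on $\gog$ has no invariant subspace with the requisite properties other than those through $\Lie(H)$, and those have covolume bounded below in terms of the discriminant of $Hx$ — one has to check this does not destroy the bound, or more simply one absorbs it into the height factor. So only the first alternative survives, giving exactly the escape-of-mass estimate with exponents $\ref{exp:escape}, \ref{escape:height}$ depending only on $\dim\bG$ (hence on $G$), and with the implied constant $\ref{escape}$.

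Concretely the steps, in order, are: (i) fix $h \in H$, $|h|\le 1$, with $\height(hy)$ essentially realizing $\mht(Hx)$, and consider the function $t \mapsto \norm{\Ad(u(t)hg).v}^{-1}$ for $v \in \gog_\bZ$; (ii) observe these are reciprocals of norms of vectors whose entries are polynomials in $t$ of degree $\le \consta$ depending only on $\dim\bG$, with coefficients of size $\ll \height(hy)^{\star}$; (iii) apply the Kleinbock--Margulis nondivergence theorem for $(C,\alpha)$-good functions to bound $|\{|t|\le T : \height(u(t)hg\Gamma) > R\}|$ by $\ll \big(\mht(Hx)^{\star}/R\big)^{\star} T$, using that the alternative produces a $U$-invariant hence $H$-invariant rational subspace which, by the irreducibility analysis of \S\ref{sec:subgroups} and the fact that $Hx$ is closed, cannot have arbitrarily small covolume relative to $\mht(Hx)$ — so for $R$ a suitable power of $\mht(Hx)$ the alternative is vacuous; (iv) let $T\to\infty$, apply the Birkhoff ergodic theorem as in~\eqref{eq:Birk-ghGamma} (now for the indicator of $X \setminus \mathfrak S(R)$, approximated by continuous functions), to get $\mu_{Hx}(Hx\setminus\mathfrak S(R)) \ll \big(\mht(Hx)^{\star}/R\big)^{\star}$; (v) reorganize the exponents to the claimed form $\ref{escape}\,\mht(Hx)^{\ref{escape:height}} R^{-\ref{exp:escape}}$.

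The main obstacle I anticipate is step (iii)–(iv): controlling the escape-of-mass \emph{uniformly} over the closed orbit while keeping the dependence on $\mht(Hx)$ polynomial, and in particular ruling out the Dani--Margulis second alternative with a quantitatively harmless lower bound on the covolume of any $U$-invariant rational subspace. This is where one genuinely uses that the centralizer of $H$ is one-dimensional: the list of $H$-invariant subspaces of $\gog$ is short (Proposition~\ref{Prop:subgroups not 2,2} / \ref{Prop:subgroups 2,1} give the subalgebra ones, and the same analysis classifies all invariant subspaces), so the only obstruction to nondivergence corresponds to the orbit $Hx$ itself being ``already in the cusp'', and the covolume of the relevant subspace is then comparable to a power of $\mht(Hx)$, which is exactly the $\mht(Hx)^{\ref{escape:height}}$ factor we are allowed in the conclusion. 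Making the passage from the discrete Dani--Margulis statement to the polynomial-in-$R$ refinement precise — i.e.\ extracting the $R$-dependence from the $(C,\alpha)$-good estimates rather than just the qualitative dichotomy — is the technically delicate part, but it is standard Kleinbock--Margulis theory and should go through cleanly here.
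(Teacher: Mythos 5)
Your proposal lands in the right neighborhood but takes a needlessly complicated route and contains a conceptual misreading of where the structure of $H$ is used. The paper's proof is considerably simpler and does \emph{not} need the Dani--Margulis dichotomy or any analysis of $H$-invariant subspaces.

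Here is the discrepancy. The quantitative nondivergence input (the paper's version of Kleinbock--Margulis, stated as the theorem labelled {\tt thm:KM} with inequality~\eqref{eq:cov-nondiv}) does not present itself as a dichotomy that you must ``close off'' by ruling out a second alternative. It is an unconditional estimate: \emph{if} for every rational subspace $\mathfrak l$ the supremum of the covolume along the orbit is $\geq v_0$, \emph{then} the escape of mass above height $R$ is $\ll(Rv_0)^{-\kappa}$. The entire content of the paper's Lemma~\ref{lem:non-div} is the observation that this hypothesis is satisfied \emph{automatically} with $v_0$ a fixed negative power of the initial height: the polynomial $\psi_{\mathfrak s}(t)={\rm covol}({\rm Ad}(u_t)\mathfrak s)^2$ has $\psi_{\mathfrak s}(0)={\rm covol}(\mathfrak s)^2\gg{\rm ht}(hz)^{-m}\gg\mht(Hx)^{-m}$ (using $|h|\leq 1$), and the supremum over a time interval is at least the value at $t=0$. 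That is all; there is no second alternative to rule out, no $U$-invariant subspace to analyze, and no appeal to the list of intermediate subgroups from \S\ref{sec:subgroups}. Your concluding assertion that the mass-in-the-cusp lemma ``genuinely uses that the centralizer of $H$ is one-dimensional'' is mistaken: the centralizer structure is exploited elsewhere (in the closing lemma machinery of \S\ref{sec:DM} and in \S\ref{sec:generation}), not here. The lemma is a generic nondivergence statement that holds for any reductive $H$.

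Two smaller points. First, the paper does not integrate over $H/U$-translates; it picks a single generic $h\in H$ with $|h|\leq 1$ via Birkhoff so that the time average of ${\rm ht}>L$ along $u(t)hz$ equals $\mu_{Hx}(Hx\setminus\mathfrak S(L))$, which avoids any Fubini bookkeeping. Your Fubini approach could be made to work but is a complication. Second, your step (iii) plan of ``ruling out the Dani--Margulis second alternative'' by showing any $U$-invariant rational subspace is $H$-invariant and then bounding its covolume by the discriminant of $Hx$ is both unnecessary and the technically wobbly part of your write-up (you acknowledge this with ``one has to check this does not destroy the bound, or more simply one absorbs it into the height factor''). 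Dropping that entire strand and replacing it with the one-line observation about $\psi_{\mathfrak s}(0)$ is what makes the proof clean.
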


\begin{proof}

 Let $z\in Hx$ be the point of the smallest height, namely ${\rm ht}(z)={\mht}(Hx)$. 
By the Birkhoff ergodic theorem, there exists some $h\in H$ with $|h|\leq 1$ 
such that  
\begin{equation}
\label{eq:birk-nondiv}
\lim_{T\to\infty}\frac{1}{T}\Big|\left\{0<t<T: {\rm ht}(u(t)hz)> L \right\}\Big|=\mu_{Hx}\pa{Hx\setminus \mathfrak S(L)}.
\end{equation}
for all $L\in\mathbb N.$

Let $hz=g\Gamma$ for some element $g\in G$. 
Let $\mathfrak s$ be a $g$-rational subspace of $\gog$.  As was done in~\cite[App.~B]{EMV}, define the function 
\[
\psi_{\mathfrak s}(t)={\rm covol}({\rm Ad}(u_t)\mathfrak s)^2.
\]
Note that $\psi_{\mathfrak s}$ is a polynomial whose degree is bounded in terms of $\dim G$ only. 
On the other hand, since $|h|\leq 1$ we have $\psi_{\mathfrak s}(0)={\rm covol}(\mathfrak s)^2\gg {\rm ht}(z)^{-m}$ for some 
absolute constant $m$ depending on $N.$ 
We have, by~\cite[Thm.~5.2]{KM}, for any $T>0$ 
\[
\frac{1}{T}\Big|\left\{0<t<T: {\rm ht}(u_thz)> R  \right\}\Big|\ll \Bigl(\frac{{\rm ht}(z)^{m}}{R}\Bigr)^{\ref{exp:escape}} =\frac{{\mht}(Hx)^{\ref{escape:height}}}{R^{\ref{exp:escape}}}.
\]
where $\ref{exp:escape}$ depends on degree of $\psi_{\mathfrak s}$
and $\ref{escape:height}:=m\ref{exp:escape}.$
This together with~\eqref{eq:birk-nondiv} implies the claim.
\end{proof}

Given a closed orbit $Hx,$ define
\beq\label{eq:def-R0}
R_{Hx}:=\tfrac{2^{{20}/{\ref{exp:escape}}}{\mht}(Hx)^{{\ref{escape:height}}/{\ref{exp:escape}}}}{\ref{escape}^{1/\ref{exp:escape}}}.
\eeq
This choice in view of Lemma~\ref{lem:non-div} implies the following.
\beq\label{eq:cpct-R0}
\mu_{Hx}\pa{Hx\setminus \mathfrak S(R_{Hx})}\leq2^{-20}.
\eeq

\section{From generic points to new almost invariants}
Let $\nu$ and $\temp$ be as in \S\ref{ss:spectral-input};
we continue to denote by $\mu$ the $H$-invariant  probability measure on $Hx_0.$
Let $M=20\temp$ and let $T\geq 1$ be a parameter.  Following \cite{EMV} we define for a function $f$ 
\beq\label{eq:erg-aver}
D_{T,\nu}(f)(x)=\frac{1}{\pa{T+1}^M-T^M}\int_{T^M}^{\pa{T+1}^M}f(u(t)x)\operatorname{d}\!t-\int_{X}f\operatorname{d}\!\nu.
\eeq
We write $D_T$ for $D_{T,\mu}.$
A point $x\in X$ is called $T_0$-\emph{generic} for the measure $\nu$ w.r.t.\ a Sobolev norm $\cS$  
if for all integers $n>T_0$ and all $f\in C_{c}^{\infty}(X)$ we have 
\begin{equation}\label{eq:generic}
\av{D_{n,\nu}(f)(x)}\leq n^{-1}\cS(f).
\end{equation}
Furthermore, a point $x\in X$ is called  $[T_0,T_1]$-\emph{generic} if (\ref{eq:generic}) is satisfied for all $n\in [T_0,T_1]$.

Let $\operatorname{d}\!w$ be the Lebesgue measure on $W\cong\bR^{kl}$, see~\eqref{eq:def-W}, and for any $\tau>0$
put 
\[
W[\tau]:=\{w\in W: \|w\|_{\infty}\leq\tau \}.
\]

\begin{lem} [Cf.~\cite{EMV}, \S 9.1] \label{lem:generic-almost-inv}
\begin{enumerate}
\item For $d\gg1$, depending only on $G$, the $\nu$-measure of the points that are not $T_0$-generic  for $\nu$ with respect to $\cS_{d}$ is $\ll T_0^{-1}$.
\item There exists $d'>d$ depending on $d$ and $G$ with the following property. Suppose 
\[
|w_*\mu(f)-\mu(f)|\ll \epsilon \cS_{d}(f)\quad\text{for all $w\in W[\tau].$}
\]
Then there exists some $\consta\label{k:gen-ni-tau}$ 
so that the proportion of points 
 $(w,x)\in W[\tau]\times X$ such that $wx$ is {\em not} $[T_0,\epsilon^{-\ref{k:gen-ni-tau}}]$-generic w.r.t.\ $\cS_{d'}$ 
is $\ll T_0^{-1}.$ 
\end{enumerate}
\end{lem}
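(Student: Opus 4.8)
The plan is to follow the strategy of \cite[\S9.1]{EMV}, adapted to the present setup. Part (1) is the easier statement and essentially a direct consequence of the spectral input. Fix $d$ large enough that property ($\cS$-5) applies with $\nu$ and $\temp$. I would estimate, for each integer $n$, the $L^2(\nu)$-norm of the function $x\mapsto D_{n,\nu}(f)(x)$: expanding the square and using Fubini, $\|D_{n,\nu}(f)\|_{L^2(\nu)}^2$ is an average over $s,t$ in the window $[n^M,(n+1)^M]$ of the correlations $\langle u(s-t).f,f\rangle_{L^2(\nu)}-\nu(f)\nu(\bar f)$. By \eqref{sobnormmc} each such correlation is $\ll_d (1+|s-t|)^{-1/(2\temp)}\cS_d(f)^2$, and since the window has length $\asymp n^{M-1}$ with $M=20\temp$, averaging the polynomial decay over $|s-t|\le n^M$ yields $\|D_{n,\nu}(f)\|_{L^2(\nu)}^2\ll_d n^{-c}\cS_d(f)^2$ for a fixed $c>1$ (here one uses that $M$ was chosen large relative to $\temp$, exactly as in \cite{EMV}). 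Then a Chebyshev/Markov argument: the set where $|D_{n,\nu}(f)(x)|>n^{-1}\cS_d(f)$ has $\nu$-measure $\ll n^{-c+2}$, and summing this over integers $n>T_0$ (with $c-2>1$, which dictates how large $d$ and $M$ must be — or, more precisely, a Borel–Cantelli-type union bound over dyadic scales with the countable family replaced by a single test using ($\cS$-2) to reduce to a dense countable subfamily) gives that the non-$T_0$-generic set has measure $\ll T_0^{-1}$.

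Part (2) is the substantive statement. The hypothesis is that $\mu$ is $\epsilon$-almost invariant under $W[\tau]$; the goal is to transfer genericity of the typical point of $X$ (with respect to $\nu$, from part (1) applied with $\nu$) to genericity of $wx$ with respect to $\mu$ along a long but finite window $[T_0,\epsilon^{-\ref{k:gen-ni-tau}}]$. The key mechanism is: for $x$ in the generic set for $\nu$ and $w\in W[\tau]$, write $D_{n,\mu}(f)(wx)=D_{n,\nu}(f)(wx)+(\nu(f)-\mu(f))$, so it suffices to control $|\nu(f)-\mu(f)|$ and the $\nu$-genericity of the pushed-forward point. For the first term, one compares $\nu$ and $\mu$ using the $W[\tau]$-almost invariance of $\mu$ together with the fact that $W$ is (part of) the expanding horospherical subgroup, so that translating $\mu$ by $u(t)$ for moderately large $t$ and integrating $w$ over $W[\tau]$ effectively spreads $\mu$ toward $\nu$ — quantitatively this costs a power of $\epsilon$ and forces the cutoff $\epsilon^{-\ref{k:gen-ni-tau}}$ on the length of the window. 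For the $\nu$-genericity of $wx$: by part (1) the bad set $B\subset X$ for $\nu$ has $\nu(B)\ll T_0^{-1}$; I need to bound the proportion of $(w,x)\in W[\tau]\times X$ with $wx\in B$. Since $W$ lies in the expanding horospherical subgroup for $a_t$, and $\nu$ is $S$-invariant hence $a_t$-quasi-invariant in the appropriate sense, the measure $\frac{1}{|W[\tau]|}\int_{W[\tau]}\delta_{wx}\,dw$ pushed forward from the source measure (say $\mu$, or more precisely the measure on $X$ appearing in the conclusion) is comparable to a multiple of $\nu$ up to a controlled Radon–Nikodym factor — this is exactly the kind of ``thickening the $H$-orbit in the $W$-direction approximates the $S$-orbit'' estimate used in \cite{EMV}. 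Hence $\iint \mathbf{1}_B(wx)\,dw\,d(\cdot) \ll \nu(B)\ll T_0^{-1}$, as required.

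The main obstacle I expect is the second part of the transfer in (2): making precise, with explicit power-of-$\epsilon$ bookkeeping, the comparison between the $W[\tau]$-thickened source measure and the target measure $\nu$, and simultaneously tracking how the $\epsilon$-almost invariance degrades under the ergodic averaging operator $D_{n,\mu}$ as $n$ grows — this is what pins down the admissible window length $\epsilon^{-\ref{k:gen-ni-tau}}$ and the value of the exponent $\ref{k:gen-ni-tau}$. One must apply ($\cS$-1) to control $\cS_d(u(t).f)$ against $\cS_d(f)$ with a factor $|u(t)|^{2d}\ll (1+|t|)^{\star}$, and since $t$ ranges up to $((n+1)^M)$ with $n$ up to $\epsilon^{-\ref{k:gen-ni-tau}}$, the polynomial loss in $t$ must be beaten by the gain from almost invariance — balancing these two is precisely where $\ref{k:gen-ni-tau}$ is determined, and one must check the chosen $d'>d$ (enlarged to absorb the ($\cS$-1) and ($\cS$-4) losses) still lets ($\cS$-5) and part (1) apply. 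Everything else — the Fubini manipulations, the Chebyshev bounds, the reduction to a countable dense family of test functions via ($\cS$-2) and ($\cS$-3) — is routine once the window length and exponents are fixed.
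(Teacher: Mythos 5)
Your treatment of part (1) matches the paper: the $L^2(\nu)$-estimate
\[
\int_X |D_{T,\nu}(f)|^2\,d\nu \ll T^{-4}\cS_d(f)^2,
\]
obtained from ($\cS$-5), followed by Chebyshev and a union over scales $n>T_0$ (with the reduction to a countable dense family via ($\cS$-2)), is exactly how the paper derives part (1), citing \cite[Prop.~9.1]{EMV} for the last step.

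For part (2), however, your plan has a genuine gap and is in fact circular. The decomposition $D_{n,\mu}(f)(wx) = D_{n,\nu}(f)(wx) + (\nu(f)-\mu(f))$ requires controlling $|\nu(f)-\mu(f)|$ for the invariant measure $\nu$ of a larger group $S\supsetneq H$; but that comparison is precisely the conclusion of Theorem~\ref{thm:main}, and is not available as an input at this stage. The same objection applies to your appeal to a ``$W[\tau]$-thickening of the $H$-orbit approximates the $S$-orbit'' estimate: that is also tantamount to the theorem. (And taking $\nu=\mu$ makes the decomposition vacuous, while the thickening comparison still requires pushing forward indicator functions of bad sets, for which Sobolev almost-invariance does not directly apply.) The paper's proof is more elementary and never leaves the measure $\mu$: take $D_T=D_{T,\mu}$ and bound the single quantity
\[
\frac{1}{|W[\tau]|}\int_{W[\tau]}\int_X |D_T(f)(wx)|^2\,d\mu(x)\,dw.
\]
The inner integral equals $w_*\mu(|D_T(f)|^2)$, and the hypothesis of $\epsilon$-almost invariance applied to the smooth test function $|D_T(f)|^2$ gives
\[
w_*\mu\bigl(|D_T(f)|^2\bigr) \ll \epsilon\,\cS_d\bigl(|D_T(f)|^2\bigr) + \mu\bigl(|D_T(f)|^2\bigr).
\]
The first term is bounded via ($\cS$-1) and ($\cS$-4) by $\epsilon\,T^{\star d}\cS_{d+\ref{sob-prod}}(f)^2$ (this is where $d'=d+\ref{sob-prod}$ enters), and the second by the $L^2$-estimate from part (1) applied with $\nu=\mu$, giving $T^{-4}\cS_d(f)^2$. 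Choosing $T\leq\epsilon^{-\star}$ so that the first term is subsumed yields $T^{-4}\cS_{d+\ref{sob-prod}}(f)^2$ for the double integral, and the Chebyshev argument of part (1) then applies verbatim over $W[\tau]\times X$. Your bookkeeping instinct --- that the polynomial losses in $t$ from ($\cS$-1) must be beaten by the $\epsilon$-gain, thereby fixing $\ref{k:gen-ni-tau}$ and the admissible window length --- is correct, but this balancing happens entirely inside the bound on $\cS_d(|D_T(f)|^2)$, not in any measure-comparison step.
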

\begin{proof}
First note that using~\eqref{sobnormmc} we have the following estimate on the $L^2$-norm of $D_{T,\nu}(f)$. 
\begin{equation}\label{l2}
\int_X |D_{T,\nu}(f)|^2 \operatorname{d}\!\nu \ll T^{-4} \cS_{d}(f)^2.
\end{equation}
The deduction of part (1) from~\eqref{l2} is identical to that of \cite[Prop.\ 9.1]{EMV}.

For (2), consider the integral
\begin{eqnarray}\label{dtf}
\frac{1}{|W[\tau]|}\int_{W[\tau]}\int_X |D_T(f)(wx)|^2 \operatorname{d}\!\mu(x) \operatorname{d}\!w.\end{eqnarray}
The inner  integral of \eqref{dtf} satisfies
\[
\int_X |D_Tf(wx)|^2 \operatorname{d}\!\mu(x)=w_*\mu(|D_T(f)|^2) \ll \epsilon \cS_d(|D_T(f)|^2) +\mu(|D_T(f)|^2).
\]
By properties of Sobolev norm in \S\ref{sec:sob}
\[
\cS_d(|D_T(f)|^2)\ll T^{\star d}\cS_{d+\ref{sob-prod}}(f)^2.
\]
Combining this with \eqref{l2} we have
\[
\int_X |D_Tf(wx)|^2 \operatorname{d}\!\mu(x)\ll \epsilon T^{\star d}\cS_{d+\ref{sob-prod}}(f)^2+T^{-4} \cS_{d}(f)^2.
\]
Therefore, if we choose $T$ so that $\epsilon T^{\star d}=T^{-4},$ then
\begin{equation}\label{part2}
\frac{1}{|W[\tau]|}\int_{W[\tau]}\int_X |D_T(f)(wx)|^2 \operatorname{d}\!\mu(x) \operatorname{d}\!w\ll T^{-4}\cS_{d+\ref{sob-prod}}(f)^2.
\end{equation}
The deduction of part (2) from \eqref{part2} is identical to that of part (1), see also \cite[Prop.\ 9.1--9.2]{EMV}.
\end{proof}

The following lemma provides us with generic points which differ in 
directions transversal to $H.$ The proof is based on a pigeonhole principle argument.  
We note that in this lemma the existence of the centralizer~$\mathfrak r_0$
starts to play a more significant role.

\begin{lem}[Cf.~\cite{EMV}, Prop.~14.1]\label{lem:separation}
There exist $\consta\label{k:ht-vol}$ and $\consta\label{leaf:separation}$ with the following property. 
Let $Hx_0$ be a closed orbit so that
\beq\label{eq:low}
\vol(Hx_0)\geq{\mht}(Hx_0)^{\ref{k:ht-vol}}.
\eeq
Then there exist $w\in \mathfrak{r}\setminus\{0\}$ and $x,y\in \goS (R_{H{x_0}})\cap Hx_0$ so that the following hold. 
\begin{enumerate}
\item $\norm{w}\leq ({\vol}(Hx_0))^{-\ref{leaf:separation}}.$ Moreover, if we decompose $w=w_0+w_1$ into
a sum of $w_0\in\mathfrak r_0$ and $w_1\in\mathfrak r_1,$ then $\|w_1'\|\gg \|w_1\|$ where
\[
w_1=w_1'+w_1''\in{\rm Fix}_U(\mathfrak r_1)^\perp\oplus{\rm Fix}_U(\mathfrak r_1)
\] 
and the decomposition is with respect to the Euclidean structure on $\mathfrak r_1$ 
which is induced by $\norm{\cdot}$. 
\item $\exp(w)x=y$ 
\end{enumerate}
Further, given $T_0$ large enough, $x$ and $y$ can be chosen to be $T_0$-generic. 
\end{lem}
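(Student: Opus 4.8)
The plan is to combine a volume-vs-injectivity-radius argument with the pigeonhole principle, exactly in the spirit of \cite[Prop.~14.2]{EMV}, and then upgrade the resulting pair of points to generic ones using Lemma~\ref{lem:generic-almost-inv}. First I would work inside the compact set $\goS(R_{Hx_0})$: by~\eqref{eq:cpct-R0} we have $\mu_{Hx_0}(Hx_0\cap\goS(R_{Hx_0}))\geq 1-2^{-20}$, so the portion of the closed orbit lying in this compact set carries almost full mass, and its volume is therefore $\gg \vol(Hx_0)$. On a fixed compact set the injectivity radius for the ambient $G$-action is bounded below by a constant depending only on $R_{Hx_0}$ (hence only on $\mht(Hx_0)$, via~\eqref{eq:def-R0}), so small balls $\exp(B_\gor(\delta))\exp(B_\goh(\delta))x$ are embedded. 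Since the $H$-orbit is $\dim H$-dimensional while a neighborhood of $x$ in $X$ is $\dim G$-dimensional, a covering/counting argument shows that if $\vol(Hx_0)$ is large compared to a fixed power of $\mht(Hx_0)$ — this is the hypothesis~\eqref{eq:low} with the exponent $\ref{k:ht-vol}$ chosen to absorb the injectivity-radius loss coming from $R_{Hx_0}$ — then two distinct sheets of the $H$-orbit must come within distance $(\vol Hx_0)^{-\ref{leaf:separation}}$ of each other transversally. Concretely, cover $\goS(R_{Hx_0})$ by $\asymp \vol(Hx_0)^{\dim G/\dim H\cdot(1-\ref{leaf:separation}\dim H)}$-many flow boxes of transversal size $(\vol Hx_0)^{-\ref{leaf:separation}}$; the $H$-orbit spends total $H$-volume $\gg\vol(Hx_0)$ inside them, so some box is visited by two transversally-separated local $H$-leaves, giving points $x,y\in \goS(R_{Hx_0})\cap Hx_0$ with $\exp(w)x=y$, $w\in\gor$, $\|w\|\leq(\vol Hx_0)^{-\ref{leaf:separation}}$. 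This proves part~(1).

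For part~(2), the point is that $w$ cannot lie entirely in $\mathfrak r_0\oplus{\rm Fix}_U(\mathfrak r_1)$. Write $w=w_0+w_1'+w_1''$ as in the statement and suppose toward a contradiction that $\|w_1'\|$ is much smaller than $\|w_1\|$ (equivalently $\|w\|\asymp\|w_0+w_1''\|$). The subspace $\mathfrak r_0\oplus{\rm Fix}_U(\mathfrak r_1)$ together with $\goh$ is the Lie algebra of a proper subgroup normalized by $U$ (and containing the centralizer direction $A$): conjugating $y=\exp(w)x$ by $u(s)$ for $s$ up to a polynomially large time keeps $\exp(\Ad(u(s))w)$ small because $\Ad(u(s))$ acts trivially on ${\rm Fix}_U(\mathfrak r_1)$ and on $\mathfrak r_0$. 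Then the closed $H$-orbit through $x$ would be forced to have volume bounded polynomially in $\mht(Hx_0)$ — one runs the $H$-orbit-closure/entropy argument, or more simply observes that $x$ and $y=\exp(w)x$ with $w\in\Lie(AH\cdot{\rm Fix})$ and $x,y$ on the same closed $H$-orbit forces $\exp(w)$ to essentially lie in a conjugate of a small group, bounding the covolume of the relevant lattice and hence $\vol(Hx_0)$ from above. This contradicts~\eqref{eq:low} once $\ref{k:ht-vol}$ is taken large enough, so $\|w_1'\|\gg\|w_1\|$ holds. I expect this transversality/non-degeneracy dichotomy to be the main obstacle: one must make the ``$w$ escapes the $U$-fixed directions'' step quantitative, tracking how the small displacement $w$ interacts with the height bound $R_{Hx_0}$ and with $\vol(Hx_0)$, and it is here that the precise value of the exponents $\ref{k:ht-vol},\ref{leaf:separation}$ gets pinned down.

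Finally, for the ``moreover'' clause, I would invoke Lemma~\ref{lem:generic-almost-inv}(1): with $d\gg1$ the set of points of $Hx_0$ that fail to be $T_0$-generic for $\mu$ with respect to $\cS_d$ has $\mu_{Hx_0}$-measure $\ll T_0^{-1}$, which for $T_0$ large is much smaller than $2^{-20}$. So after intersecting $\goS(R_{Hx_0})\cap Hx_0$ with the generic set we still retain a subset of almost full measure, and hence of volume $\gg\vol(Hx_0)$; rerunning the pigeonhole argument of part~(1) inside this smaller set produces $x,y$ that are simultaneously in $\goS(R_{Hx_0})$, related by a small transversal $w$ satisfying~(1) and~(2), and both $T_0$-generic. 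The only care needed is that the generic set is not a union of flow boxes, but since it has density $\geq 1-2^{-19}$ in $\goS(R_{Hx_0})\cap Hx_0$ the same volume comparison and covering argument go through verbatim.
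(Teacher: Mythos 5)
Your handling of part~(1) and the ``moreover'' clause is broadly the paper's route: restrict to the high-mass compact piece, use the injectivity-radius bound $\delta=R_{Hx_0}^{-\star}$ to set up flow boxes, pigeonhole on volume, and use Lemma~\ref{lem:generic-almost-inv}(1) to discard a $\ll T_0^{-1}$ set of non-generic points. (The paper additionally introduces the density function $\phi$ of~\eqref{eq:desityfunc} and the set $F$ of~\eqref{eq:def-F} before pigeonholing; this is not cosmetic, as explained below.)

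The genuine gap is in part~(2). You argue by contradiction: ``suppose $\|w_1'\|$ is much smaller than $\|w_1\|$, then $\vol(Hx_0)$ is forced to be small.'' This does not prove what is needed, for two reasons. Logically, the lemma asserts the \emph{existence} of a pair $x,y$ with a displacement $w$ having $\|w_1'\|\gg\|w_1\|$; ruling out ``bad'' pairs is not the same as producing a good one, and the pigeonhole step in~(1) gives no control whatever over the direction of $w$ (two sheets of a huge closed $H$-orbit can perfectly well come close along $\mathfrak r_0\oplus\Fix_U(\mathfrak r_1)$ without that in any way bounding the orbit volume). Substantively, the claimed implication ``$w$ close to $U$-fixed $\Rightarrow$ $\vol(Hx_0)\ll\mht(Hx_0)^{\star}$'' is not justified and I don't believe it holds as stated; the only place the paper uses anything like integrality or a closing argument to constrain $w$ is Lemma~\ref{lem:not-cent}, which is a separate lemma proving the weaker, qualitative fact $\exp(w)\notin C_G(H)$ by a $\bQ$-rationality argument, and it does \emph{not} give the quantitative comparison $\|w_1'\|\gg\|w_1\|$.

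What the paper actually does for part~(2) is a \emph{perturbation} argument, and this is why $\phi$ and $F$ are set up before the pigeonhole. Having found $x',y'\in F$ in the same flow box with $x'\neq hy'$ for all $h\in\Omega$, one replaces $x'\mapsto h_1x'$ and $y'\mapsto h_2y'$ with $h_1,h_2\in\Omega$. Under such a replacement the transversal displacement $w$ is essentially conjugated by $\Ad(h_1)$, and because $\mathfrak r_1^{\pm}$ are irreducible $H$-representations one has the genericity bound~\eqref{eq:conjgeneric}: for all $v\in\mathfrak r_1$ the set of $h\in\Omega$ for which the $\Fix_U(\mathfrak r_1)^{\perp}$-component of $\Ad(h)v$ fails to be $\gg\|v\|$ has measure $<\vol(\Omega)/2$. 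Since $\phi(x')>0.99$ and $\phi(y')>0.99$, there is plenty of room to choose $h_1,h_2$ simultaneously keeping $h_1x',h_2y'\in F$ (hence in $\goS(R_{Hx_0})$ and $T_0$-generic) \emph{and} making $\|w_1'\|\gg\|w_1\|$. Your proposal has no analogue of this conjugation step, and without it the quantitative conclusion of~(2) is out of reach.
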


\begin{proof}
Let $T_0>1$, and let $E'$ be the set of $T_0$-generic points. 
Put $E:=E'\cap\goS(R_{Hx_0}).$ In view of Lemma~\ref{lem:generic-almost-inv}(1) and 
the choice of $R_0,$ see~\eqref{eq:def-R0}, we have $\mu(X\setminus E)\leq 2^{-10}$ assuming $T_0$
is sufficiently large depending on $G, H$ and $\Gamma.$ 

For any $\delta>0$ let $\mathfrak r_\delta$ (resp.\ $\mathfrak h_\delta$) denote the ball of radius $\delta$ in $\mathfrak r$ (resp.~$\mathfrak h$) around the origin with respect to the norm $\norm{\cdot}$ on $\mathfrak g$. 
 Throughout the proof we will put more and more restrictions on $\delta$. To begin with, let $\delta>0$ be smaller than $1/20$ of the minimum of the injectivity radii at all $z\in\mathfrak S(R_{Hx_0})$.
This constraint amounts to an inequality of the form
\begin{equation}\label{Rstarinlemma}
\delta\ll R_{Hx_0}^{-\star}.
\end{equation}
Moreover, assume $\delta$ is small enough so that the map
$(r,v)\in\mathfrak r_\delta\times\mathfrak h_\delta\mapsto\exp(r)\exp(v)$ is a diffeomorphism onto its image in $G$. Therefore, for any $z\in \goS (R_{Hx_0})$ 
the natural map from $\pi_z:\mathfrak r_\delta\times\mathfrak h_\delta\to X$ defined by $\pi_z(r,v):=\exp(r)\exp(v)z$ is a diffeomorphism. 
Let $\Omega=\exp(\mathfrak h_{2\delta}).$ Define the following function on $X$
\begin{equation}\label{eq:desityfunc}
\phi(z)=\frac{1}{\vol(\Omega)}\int_\Omega\chi_E(hz)\operatorname{d}\!\vol(h).
\end{equation}
We have $\int_X\phi(z)d\mu=\mu(E)\geq1-2^{-10}.$ 
Put 
\beq\label{eq:def-F}
F:=\{z\in E: \phi(z)>0.99\}.
\eeq 
Then $\mu(F)\geq0.9.$

For $\delta$ chosen as above define $\mathsf B(z,\delta):=\pi_z(\mathfrak r_\delta\times\mathfrak h_\delta)$. 
We may cover ${F}$ by $\ll\delta^{-\dim G}$-many sets of the form $\mathsf B(z,\delta)$ with $z\in F$  and with
finite multiplicity depending only on $G$.
Using the pigeonhole principle we have the following. 
So long as 
\begin{equation}\label{bluething} 
\vol(Hx_0)\gg \delta^{\dim H-\dim G},
\end{equation}
there exist $x',y'\in F\cap\mathsf B(z,\delta)$ for some $z\in F$
so that $x'\neq hy'$ for any $h\in\Omega.$ 

We now want to perturb $x', y'$ to obtain elements of $E$ that satisfy the above claimed properties. 
First, note that since $\mathfrak r_1^+$ and $\mathfrak r_1^-$ are irreducible representations of $H,$ 
there is a constant $\iota>0$ such that
\begin{equation}\label{eq:conjgeneric}
\vol\{h\in\Omega: {\|{\Ad}(h)(r)'\|}\leq \iota {\|r\|}\}<\vol(\Omega)/2
\end{equation}
for all $r\in\mathfrak r_1$ where $r'$ is the component of $r$ in ${\rm Fix}_U(\mathfrak r_1)^\perp.$
Now, if we apply the Implicit Function Theorem and use the fact that 
$\phi(x')>0.99$ and $\phi(y')>0.99$ we can find $h_1,h_2\in\Omega$ such that 
\begin{itemize}
\item $h_1x',h_2y'\in E$ and 
\item $h_2y'=\exp(w)h_1x'$ where $w\in\mathfrak r$ and $\|w\|\ll\delta,$ 
\item  $\|w_1'\|\gg\|w_1\|.$ 
\end{itemize}

Therefore,  $w$, $x=h_1x',$ and $y=h_2 y'$ satisfy the conclusion of the proposition  so long as we choose
$\delta=\vol(Hx_0)^{-\star}$ so that \eqref{bluething} holds. In light of our assumption \eqref{eq:low} 
it is possible to satisfy this and our earlier requirement in \eqref{Rstarinlemma}.
\end{proof}
 
\begin{lem}\label{lem:not-cent} Let $w\in\mathfrak r$ be the ``difference'' found in Lemma~\ref{lem:separation}. Then 
 $\exp(w)\notin C_G(H)$.
\end{lem}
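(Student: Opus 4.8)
The plan is to argue by contradiction. Suppose $\exp(w)\in C_G(H)$. First I would recall that the centralizer of $H=\SL_k(\bR)\times\SL_l(\bR)$ inside $G\cong\SL_{k+l}(\bR)$ is one-dimensional, consisting of the block-scalar matrices $\diag(sI_k,s'I_l)$ of determinant one; concretely, its Lie algebra is exactly $\mathfrak r_0$, while $\mathfrak r_1=\mathfrak r_1^+\oplus\mathfrak r_1^-$ carries no nonzero $H$-fixed vectors (as $\mathfrak r_1^\pm$ are nontrivial irreducible $H$-representations). Hence $\exp(w)\in C_G(H)$ forces $w\in\mathfrak r_0$ when $w$ is small, i.e.\ $w_1=0$ in the decomposition $w=w_0+w_1\in\mathfrak r_0\oplus\mathfrak r_1$ supplied by Lemma~\ref{lem:separation}.

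The contradiction then comes from part (2) of Lemma~\ref{lem:separation}, which guarantees $\|w_1'\|\gg\|w_1\|$ where $w_1'$ is the ${\rm Fix}_U(\mathfrak r_1)^\perp$-component of $w_1$. Wait — that inequality alone does not rule out $w_1=0$. So I need the sharper input that the $x,y$ produced in Lemma~\ref{lem:separation} actually satisfy $w_1\neq 0$, equivalently $w\notin\mathfrak h\oplus\mathfrak r_0=\Lie(C_G(H))+\Lie(H)$: indeed the pigeonhole step was arranged precisely so that $x'\neq hy'$ for any $h\in\Omega=\exp(\mathfrak h_{2\delta})$, and the perturbed points $x=h_1x'$, $y=h_2y'$ inherit $y=\exp(w)x$ with $w\in\mathfrak r$ — in particular $w$ has no $\mathfrak h$-component by construction of the chart $\pi_z$, and $y\neq hx$ for $h\in\Omega$ forces $w\notin\mathfrak h_{2\delta}$, hence (as $w$ is small and lies in $\mathfrak r$) we get $w\neq 0$. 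Combined with $w\in\mathfrak r_0$ we would conclude $w_0\neq 0$; but that is consistent with centralizing $H$, so this is not yet enough either.

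The correct route, I expect, is to observe that $\exp(w)\in C_G(H)$ together with $w\in\mathfrak r$ and $w$ small forces $w\in\mathfrak r_0$ (since $\exp$ restricted to the small ball in $\mathfrak r$ is injective and $C_G(H)^\circ=\exp(\mathfrak r_0)$ meets this ball exactly in $\exp(\mathfrak r_0\cap(\text{small ball}))$), so $w_1=0$, so $w_1'=0$. Now I re-examine the construction: in the proof of Lemma~\ref{lem:separation} the perturbations $h_1,h_2$ were chosen, via \eqref{eq:conjgeneric} and the Implicit Function Theorem, so that the resulting $w$ has $\|w_1'\|\gg\|w_1\|$, but more importantly the pigeonhole guaranteed the un-perturbed pair satisfies $x'\neq hy'$ for all $h\in\Omega$, and a short argument shows this is preserved so that $w_1\neq 0$: if $w_1=0$ then $w=w_0\in\mathfrak r_0$, and then $y=\exp(w_0)x$ with $\exp(w_0)\in C_G(H)$; but then $hy'$ and $x'$ would differ by an element of $\exp(\mathfrak r_0)\cdot\exp(\mathfrak h_{2\delta})$, and tracking this through the chart $\mathsf B(z,\delta)=\pi_z(\mathfrak r_\delta\times\mathfrak h_\delta)$ contradicts the pigeonhole conclusion (which, read correctly, produced two points in the \emph{same} $\mathfrak h$-fiber up to the allowed $\Omega$-translation but genuinely separated in the $\mathfrak r$-direction transverse to $\mathfrak r_0$). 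The main obstacle is thus bookkeeping: making precise that the separation extracted in Lemma~\ref{lem:separation} is transverse not merely to $\mathfrak h$ but to $\mathfrak h\oplus\mathfrak r_0=\Lie(C_G(H)\cdot H)$, which may require going back and noting the covering in that proof should be taken by sets $\mathsf B(z,\delta)$ whose $\mathfrak r$-direction is further split as $\mathfrak r_0\oplus\mathfrak r_1$ with the pigeonhole applied in the $\mathfrak r_1$-coordinate — i.e.\ the statement " $x'\neq hy'$ for any $h\in\Omega$" should be strengthened to "$x'\neq h\exp(r_0)y'$ for any $h\in\Omega$ and small $r_0\in\mathfrak r_0$", which costs only a harmless factor in the volume/pigeonhole estimate since $\dim\mathfrak r_0=1$. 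Once that is in hand, $w_1\neq 0$, contradicting $w\in\mathfrak r_0$, and the lemma follows.
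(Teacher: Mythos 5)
Your reduction at the start is sound: since $C_G(H)^\circ=\exp(\mathfrak r_0)$ and $\exp$ restricted to a small ball in $\mathfrak r$ is injective, $\exp(w)\in C_G(H)$ for small $w\in\mathfrak r$ is indeed equivalent to $w\in\mathfrak r_0$, i.e.\ $w_1=0$. You also correctly notice that neither part (1) nor part (2) of Lemma~\ref{lem:separation} as stated rules this out. But the fix you propose --- redo the pigeonhole in Lemma~\ref{lem:separation} with sets allowed to spread in the $\mathfrak r_0$-direction as well as the $\mathfrak h$-direction, at the cost of ``only a harmless factor'' --- does not work, and the gap is conceptual, not bookkeeping.

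The pigeonhole in Lemma~\ref{lem:separation} works because for each $z\in F$ the fibre $\Omega z\cap Hx_0$ has $\mu$-measure controlled by $\vol(\Omega)/\vol(Hx_0)$, uniformly in $z$. Once you enlarge $\Omega$ to $\Omega\cdot\exp((\mathfrak r_0)_{\delta'})$, the relevant fibre is $\Omega\cdot\exp((\mathfrak r_0)_{\delta'})z\cap Hx_0$, and its measure is no longer simply $\vol(\Omega)/\vol(Hx_0)$: it is that quantity times the number of $r_0\in(\mathfrak r_0)_{\delta'}$ with $\exp(r_0)z\in Hx_0$. Controlling that count is \emph{exactly} the content of the lemma you are trying to prove. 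A priori (without arithmetic input) the closed orbit $Hx_0$ could return very close to itself along the $\mathfrak r_0$-direction --- i.e.\ cluster near an $AH$-leaf --- in which case every pair of nearby points of $F$ would separate only inside $\mathfrak h\oplus\mathfrak r_0$, and your strengthened pigeonhole would fail. So the proposal is circular: it silently assumes a lower bound on the $\mathfrak r_0$-return gap of $Hx_0$, which is what Lemma~\ref{lem:not-cent} is for.

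The paper's proof supplies precisely this missing ingredient and it is arithmetic, not geometric. Writing $\exp(w)x=hx$ with $h\in H$ and $x=g\Gamma$ gives $\gamma:=g^{-1}\exp(-w)hg\in\Gamma$. If $\exp(-w)$ centralized $H$, then $\gamma$ would lie in the $\bQ$-group $\bL=C_\bG(\bH)\cdot\bH$ (with $\bH(\bR)=g^{-1}Hg$). The two block-determinant characters $\chi_1=\chi_2^{-1}$ on $\bL$ form a Galois-stable set, so they are defined over $\bQ$ or a real quadratic field, and $\chi_i(\gamma)$ is a unit in the ring of integers with rational trace; hence $\chi_i(\gamma)$ is either $1$ or bounded away from $1$. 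Since $\chi_i$ is trivial on $\bH$, $\chi_i(\gamma)$ equals the block determinant of $\exp(-w)$, which is close to $1$ because $\|w\|$ is small. Discreteness forces $\chi_i(\gamma)=1$, so $\gamma\in\ker\chi_1\cap\ker\chi_2=\bH(\bR)$, i.e.\ $\exp(-w)\in H$ --- contradicting $0\neq w\in\mathfrak r$. This integrality/Galois step is the heart of the proof and is entirely absent from your proposal; without it (or some substitute bounding the $\mathfrak r_0$-return gap of $Hx_0$ in terms of $\height$), the argument does not close.
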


\begin{proof}
Let us write $\exp(w)x=hx$ for some $h\in H$   and $x=g\Gamma$.
We also let $\bH$ be the connected, simply connected, algebraic group 
such that $\bH(\bR)=g^{-1}Hg$. Note that $\bH$ is defined over $\bQ$ as $Hx$ is a closed orbit. With this notation we have $\exp(-w)hg\Gamma=g\Gamma,$ 
and the claim is equivalent to showing $g^{-1}\exp(-w)g\notin C_\bG(\bH)(\bR)$.  

Assume this is not the case. Then 
\[
\gamma=g^{-1}\exp(-w)gg^{-1}hg\in \bL:=C_\bG(\bH)\cdot \bH.
\] 
Note that $C_\bG(\bH)$ is one-dimensional. We define a set of characters on $\bL$,  $\Delta=\set{\chi_1,\chi_2}$, as follows. First note that for $\ell\in \bL(\bR)$, $g\ell g^{-1}$ has a block structure, that is, it has the form
$\left[
\begin{array}{c|c}
A &0 \\ \hline
0 &B
\end{array}\right]
 $.
We define  $\set{\chi_1(\ell),\chi_2(\ell)}$ to be the determinants of the diagonal blocks of  $g\ell g^{-1}$. 
This determines $\Delta$. 
Further note, that since $\bH$ is semisimple, any character on  $\bH$ is trivial. Moreover, 
for $a\in \bL(\bR)$ we have that $a\in \bH(\bR)$ if and only if $a$ is in the kernels of $\chi_i$, $i=1,2$.

Furthermore, we have that  $\chi_2=\chi_1^{-1}$ and that $\Delta$ is stable under the Galois group 
${\rm Gal}(\overline \bQ/\bQ)$ so either $\chi_1$ and $\chi_2$ are both defined over $\bQ$ or over a real  quadratic extension (as the centralizer splits over $\bR$). 
In both cases, the integrality of $\gamma$ implies that $\chi_i(\gamma)$ is either $1$ or uniformly bounded away from $1$. 
Indeed in the second case $\chi_i(\gamma)\in\cO^\times\subset \bR$ where $\cO$ is an order in a real quadratic extension.
And if a unit $u$ in a quadratic field is close to $1$, then $u+u^{-1}$ is close to $2$ and an integer which implies that $u=1$.
In particular this argument is independent of the quadratic extension.

On the other hand, as $\chi_i$ are trivial on $\bH$,  we have for $i=1,2$ 
\begin{equation*}
\chi_i(\gamma)=\chi_i(g^{-1}\exp(-w)g).
\end{equation*}
By definition the characters $\chi_i$ are defined by conjugating elements of $ \bL=C_\bG(\bH)\cdot \bH$
to $C_G(H)H$ and taking the determinants of one of the blocks.
In other words, we are taking the determinants of the blocks of the matrix $\exp(-w)$, which is
at distance $\ll ({\vol}(Hx_0))^{-\ref{leaf:separation}}$ from the identity. However, with the above
it follows that 
\[
\chi_i(g^{-1}\exp(-w)g)=1\quad \text{for $i=1,2$}.
\] 
Since the kernel of these characters is $\bH(\bR)$, this contradict the fact that $g^{-1}\exp(-w)g\notin \bH(\bR)$.
\end{proof}

We now use the effective ergodic theorem, Lemma~\ref{lem:generic-almost-inv}(1), 
and the above results to prove the following.

\begin{lem}[Cf.~\cite{EMV}, Prop.\ 10.1]\label{lem:eff-extrs-inv} Assume that 
\eqref{eq:low} holds.
There exists some $v\in{\rm Fix}_U(\mathfrak r_1)$ with $\|v\|=1$ 
so that
\[
|{\exp(tv)}_{*}\mu(f) - \mu(f)|  \leq \vol(Hx_0)^{-\star} \cS(f), \quad \text{for all }\; |t|\ll 1.
\]
\end{lem}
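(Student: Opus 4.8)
\textbf{Proof proposal for Lemma~\ref{lem:eff-extrs-inv}.}
The plan is to start from the two generic points $x,y\in\goS(R_{Hx_0})\cap Hx_0$ furnished by Lemma~\ref{lem:separation}, with $y=\exp(w)x$ and $\|w\|\leq\vol(Hx_0)^{-\ref{leaf:separation}}$, and to exploit that both $x$ and $y$ are $T_0$-generic for $\mu$ (with $T_0$ as large as we like, at the cost of a constant). Writing $w=w_0+w_1\in\mathfrak r_0\oplus\mathfrak r_1$ and $w_1=w_1'+w_1''$ with $w_1'\in{\rm Fix}_U(\mathfrak r_1)^\perp$, $w_1''\in{\rm Fix}_U(\mathfrak r_1)$, the idea (exactly as in \cite[Prop.~10.2]{EMV}) is to push $w$ around by the one-parameter unipotent flow $u(t)$: comparing the ergodic averages $\frac1{(T+1)^M-T^M}\int_{T^M}^{(T+1)^M} f(u(t)x)\,\mathrm dt$ and the corresponding average along the $y$-orbit, and using $u(t)\exp(w)x=\exp(\Ad(u(t))w)u(t)x$, one sees that $\mu$ is $\epsilon$-almost invariant (for $\epsilon$ a suitable negative power of $\vol(Hx_0)$) under $\exp(\Ad(u(t))w)$ for a range of $t$. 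Since $\mu$ is genuinely $u(t)$-invariant, the $w_0$ and $w_1''$ components are essentially unaffected by the flow, while the $w_1'$-component gets expanded under $\Ad(u(t))$; by ($\cS$-1) and the polynomial control on $\|\Ad(u(t))w\|$ this is balanced against the error at the end of the generic range $t\asymp\epsilon^{-\star}$.

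Concretely, I would first normalize: choose $t$ so that $\|\Ad(u(t))w_1'\|\asymp 1$ (possible because $\|w_1'\|\gg\|w_1\|$ is bounded below by a negative power of $\vol(Hx_0)$ but is also small, and because the $U$-action on ${\rm Fix}_U(\mathfrak r_1)^\perp$ expands polynomially with no nontrivial fixed vectors); this $t$ is a positive power of $\vol(Hx_0)$, hence within the $[T_0,\vol(Hx_0)^\star]$-generic range. Then $\Ad(u(t))w = \Ad(u(t))w_0 + \Ad(u(t))w_1' + w_1''$; here $\Ad(u(t))w_0=w_0$ since $\mathfrak r_0$ is $\Ad(U)$-fixed (it is $\Ad(H)$-fixed), $w_1''$ is $\Ad(U)$-fixed by definition of ${\rm Fix}_U$, and both $w_0,w_1''$ have norm $\leq\vol(Hx_0)^{-\star}$, so $\exp(\Ad(u(t))w)$ differs from $\exp(\Ad(u(t))w_1')$ by an element within distance $\vol(Hx_0)^{-\star}$ of the identity; by ($\cS$-3) this contributes only an admissible error. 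After passing to a subsequential limit (or just a compactness/pigeonhole step) in $t$, one extracts a unit vector $v\in{\rm Fix}_U(\mathfrak r_1)^\perp\cap$ (closure of the expanded directions) — but I should be careful here: the statement asks for $v\in{\rm Fix}_U(\mathfrak r_1)$, so in fact the limit direction lands in ${\rm Fix}_U(\mathfrak r_1)$, because after renormalizing the expanded vector $\Ad(u(t))w_1'/\|\Ad(u(t))w_1'\|$ and letting $t\to\infty$ the limit is $U$-fixed (the top-weight line of the $U$-action on $\mathfrak r_1$ is $U$-invariant). This is the mechanism by which a transversal-but-not-$U$-fixed displacement produces, in the limit, an honest $U$-fixed almost-invariance direction.

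The bookkeeping of error terms is routine given ($\cS$-1)--($\cS$-5): the generic-point inequality \eqref{eq:generic} applied to both $x$ and $y$ gives $|D_{n,\mu}(f)(x)-D_{n,\mu}(f)(y)|\leq 2n^{-1}\cS(f)$ at scale $n\asymp\vol(Hx_0)^\star$, which is $\vol(Hx_0)^{-\star}\cS(f)$; the change of variables $u(t)\exp(w)=\exp(\Ad(u(t))w)u(t)$ inside the integral converts this into almost invariance of $\mu$ under $\exp(\Ad(u(t))w)$ up to a boundary term of size $(\text{length of integration window})^{-1}\|f\|_\infty\ll\vol(Hx_0)^{-\star}\cS(f)$; and ($\cS$-1) controls the $\cS$-norm distortion by $|u(t)|^{2d}\ll t^{\star}\ll\vol(Hx_0)^\star$, which is absorbed by taking the exponent $\ref{leaf:separation}$ in Lemma~\ref{lem:separation} large enough. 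I would also invoke Lemma~\ref{lem:not-cent} at the point where I need $w\notin\Lie(C_G(H))$ in a robust sense — this guarantees $w_1\neq 0$ (equivalently $w$ has a nonzero component outside $\mathfrak r_0$), without which Lemma~\ref{lem:separation}(2) would be vacuous and no transversal direction would be produced. The main obstacle, and the step requiring the most care, is the limiting/renormalization argument that both (i) keeps the $\cS$-norm error under control as $t$ grows (this forces the precise matching between the separation exponent $\ref{leaf:separation}$ and the Sobolev exponent $2d$ from ($\cS$-1)), and (ii) correctly identifies the limiting direction as lying in ${\rm Fix}_U(\mathfrak r_1)$ rather than merely in $\mathfrak r_1$ — i.e., verifying that the highest-weight line for the $\Ad(U)$-action on $\mathfrak r_1$ is exactly ${\rm Fix}_U(\mathfrak r_1)$ in the relevant sense and that the renormalized vectors converge to it.
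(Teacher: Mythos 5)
Your proposal is correct and takes essentially the same route as the paper: the two generic points from Lemma~\ref{lem:separation} with displacement $\exp(w)$, the polynomial divergence of $\Ad(u(t))w$ (with $w_0$ and $w_1''$ remaining fixed and $w_1'$ expanding toward ${\rm Fix}_U(\mathfrak r_1)$ at scale $t\asymp\|w_1'\|^{-\star}$), and the comparison of ergodic averages via genericity and ($\cS$-1)--($\cS$-3). The only imprecision is the ``subsequential limit'' phrasing --- the paper's argument is quantitative at the specific scale $T_1\gg\|w_1'\|^{-\star}$, approximating $\Ad(u(t))w$ by a ${\rm Fix}_U(\mathfrak r_1)$-valued polynomial $p(t/T_1)$ with $\|p(1)\|=1$ up to $O(\|w\|^\star)$ error, so no limit is taken; you in any case arrive at the correct identification of the surviving direction as lying in ${\rm Fix}_U(\mathfrak r_1)$.
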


\begin{proof}
We will show that there exists some $v\in{\rm Fix}_U(\mathfrak r_1)$ with $\|v\|=1$ 
so that 
\beq\label{eq:almost-inv-v}
|{\exp(v)}_{*}\mu(f) - \mu(f)|  \leq \vol(Hx_0)^{-\star} \cS(f).
\eeq 
The claim for all $|t|\ll1$
will follow from this using conjugation by elements in $H$ as in Lemma \ref{lem:iterate}, or alternatively, by an argument 
as in the proof of~\cite[Prop.\ 10.1]{EMV}.

Let $T_0$ and $x,y\in Hx_0$ be as in Lemma~\ref{lem:separation}. 
In particular, $x$ and $y$ are $T_0$-generic, 
$y=\exp(w)x$ with $w\in\mathfrak r,$ $\|w\|\ll\vol(Hx_0)^{-\star},$ and $\|w_1'\|\gg\|w_1\|$. 
Recall also that by Lemma~\ref{lem:not-cent} we have $w_1\neq 0.$

Let us write $w=w_0+w_1''+w_1'$ as in Lemma~\ref{lem:separation}. 
Then 
\[
\Ad(u(t))w=w_0+w''_1+\Ad(u(t))w'_1.
\]
Therefore, there exists $T_1\gg \|w'_1\|^{-\star}$ and a polynomial 
$p:\mathbb R\to{\rm Fix}_U(\mathfrak r_1)$ with degree $\ll N$  
and $\sup\{\|p(t)\|: 0\leq t\leq1\}=\|p(1)\|=1$ so that 
\beq\label{eq:poly-p}
\Ad(u(t))w=p(t/T_1)+O(\|w\|^\star)
\eeq
for all $t\in[0,2T_1]$.
This is the polynomial divergence property of unipotent flows relying on the fact that $\Ad(u(t))w$ is a $\mathfrak{g}$-valued
polynomial whose terms of highest degree belong to~${\rm Fix}_U(\mathfrak{r})$.

Let now $n> T_0$ be so that $T_1\in [n^M,(n+1)^M]$ where $M$ is as in~\eqref{eq:erg-aver}.
Then by~\eqref{eq:generic} we have 
\[
|D_n(f)(z)|\leq n^{-1}\cS(f)\quad \text{for $z=x,y.$}
\]

In view of~\eqref{eq:poly-p}, property ($\cS$-3), and the fact that $f\in C_c^\infty(X)$ we have
\[
f(u(t)y)=f(\exp(p(t/T_1))u(t)x)+O(\|w\|^{\star})\cS(f)
\]
for all $t\in[0,2T_1]$.
Moreover, for any $t\in [n^M,(n+1)^M]$ we have $|t-T_1|\ll T_1^{1-1/M}.$
Hence all together we get
\[
f(u(t)y)=f(\exp(p(1))u(t)x)+O(T_1^{-1/M}\cS(f))+O(\|w\|^{\star})\cS(f).
\]
Therefore, $\mu(f)=\mu(\exp(p(1)).f)+O(T_1^{-1/M}+\|w\|^{\star})\cS(f).$\\
Since $\|p(1)\|=1,$ $T_1\gg \|w'_1\|^{-\star},$ and $\|w\|\ll\vol(Hx_0)^{-\star}$ we get~\eqref{eq:almost-inv-v}. 
\end{proof}

\section{Effective generation of a bigger group}\label{sec:generation}
We continue to use the previous notation. 
Let us first recall the following.

\begin{prop}[Cf.~\cite{EMV}, Proposition 8.1]\label{prop:eff-gen}
Let $\cS_d$ be a fixed Sobolev norm. Suppose that $\mu$ is $\epsilon$-almost invariant 
w.r.t.\ $\cS_d$ under $w\in\exp(\gor_1)$ with $\|w\|=1$. Then there exists $\consta\label{k:alm-inv-grp}>0$ 
so that $\mu$ is $c(d)\epsilon^{\ref{k:alm-inv-grp}}$-almost invariant w.r.t.\ $\cS_d$ 
under at least one of the groups $P^+$, $P^-$, or $G$. 
\end{prop}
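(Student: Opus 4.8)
The plan is to bootstrap from almost-invariance under a single element $w\in\exp(\gor_1)$ (say $w\in\exp(\gor_1^+)$, the other case being symmetric and the mixed case reducing to one of these after projecting) to almost-invariance under the full horospherical group $W^+_G(a_1)$, or rather the group $P^+=\exp(\goh\oplus\gor_1^+)$, by combining two mechanisms: conjugation by $H$ (which is genuinely invariant) and taking commutators / products of almost-invariant elements. Since $\gor_1^+$ is an irreducible $H$-module, the $\Ad(H)$-orbit of the unit vector $w$ spans $\gor_1^+$, and with a quantitative (non-effective-constant-free but $\bG$-dependent) version of this — there exist $h_1,\dots,h_m\in H$ with $|h_i|\ll 1$ such that $\{\Ad(h_i)w\}$ is a well-conditioned basis of $\gor_1^+$ — one transports the almost-invariance. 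The key point is that if $\mu$ is $\epsilon$-almost invariant under $g$ w.r.t.\ $\cS_d$ and $|h|\ll 1$, then $\mu$ is $O_d(\epsilon)$-almost invariant under $hgh^{-1}$ w.r.t.\ $\cS_d$: indeed $\mu((hgh^{-1}).f)-\mu(f) = h_*\mu(g.(h^{-1}.f)) - h_*\mu(h^{-1}.f)$ by genuine $H$-invariance of $\mu$, which is bounded by $\epsilon\cS_d(h^{-1}.f)\ll_d \epsilon|h|^{2d}\cS_d(f)\ll_d\epsilon\cS_d(f)$ using ($\cS$-1). So $\mu$ becomes $O_d(\epsilon)$-almost invariant under each $\exp(\Ad(h_i)w)$, and then under $\exp(v)$ for every $v\in\gor_1^+$ with $\|v\|\ll 1$ by writing $v$ in the $\Ad(h_i)w$ coordinates and composing $O(1)$-many almost-invariant elements (almost-invariance is sub-additive under composition up to the Sobolev constants).

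Next I would upgrade ``almost-invariant under $\exp(\gor_1^+)$ near identity'' to ``almost-invariant under the subgroup $\exp(\goh\oplus\gor_1^+)=P^+$''. For this note $[\gor_1^+,\gor_1^+]\subset\goh$ in fact $[\gor_1^+,\gor_1^+]=0$ when $\min(k,l)=1$ but in general $\gor_1^+$ is abelian as a Lie algebra (it is $*$ in the upper-right block), so commutators of $\gor_1^+$-elements vanish and this step does not directly produce $\goh$; rather, $\goh = [\gor_1^+,\gor_1^-]$, so I would instead argue as in~\cite[Prop.~8.1]{EMV}: once $\mu$ is almost invariant under all of $\exp(\gor_1^+)$ (a group), I apply the Mautner-type / "almost-invariance spreads along the orbit" argument using the closed $P^+$-orbit (or $G$-orbit) and its spectral gap from \S\ref{ss:spectral-input} — the representation of $L$ on $L^2_0(\nu)$ being $1/\temp$-tempered — to conclude $\mu$ is almost invariant under the group generated, which among the subgroups classified in \S\ref{sec:subgroups} containing a nontrivial piece of $\gor_1^+$ and being normalized appropriately is one of $P^+$, $P^-$ (if the original $w$ had a $\gor_1^-$ component), or $G$ (if it has components in both, since $\gor_1^+$ and $\gor_1^-$ together generate $\gog$). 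The exponent $\ref{k:alm-inv-grp}$ tracks the finitely many multiplications and conjugations, each of which costs a fixed power of $\epsilon$ and a $c(d)$ factor.

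The main obstacle, and the part that requires genuine work rather than bookkeeping, is the passage from almost-invariance under the \emph{individual group element} to almost-invariance under the \emph{subgroup} — i.e.\ reconstructing the "missing" directions ($\goh$, and $\gor_0$ if one is heading to $AP^\pm$ or $G$). Almost-invariance is not closed under taking the group generated, because the constants can blow up along long words; the resolution in \cite{EMV} is precisely the "effective generation" packaged via the spectral gap: one shows that a measure which is almost-invariant under enough transverse directions must, by an effective equidistribution/mixing estimate (a quantitative Mautner phenomenon, using that the $P^+$-action on $L^2_0$ of the $S$-orbit is tempered), actually be almost the $S$-invariant measure for the appropriate $S$, hence almost-invariant under all of $S$ with only a polynomial loss. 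So my plan is: (i) transport via $\Ad(H)$, (ii) fill out $\exp(\gor_1^\pm)$ by $O(1)$ compositions, (iii) invoke the EMV effective-generation lemma with the spectral gap to jump to $P^+$, $P^-$ or $G$; and I expect step (iii) — matching the hypotheses of the \cite{EMV} machinery to the current setup and keeping track of which $S$ arises depending on whether $w$ lies in $\gor_1^+$, $\gor_1^-$, or neither — to be where the real content lies, with steps (i)--(ii) being routine given ($\cS$-1)--($\cS$-3).
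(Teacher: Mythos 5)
Steps (i) and (ii) of your plan are in the right direction, but step (iii), which you identify as carrying the real content, is where the gap lies. You propose to use the spectral input of \S\ref{ss:spectral-input} (the $1/\temp$-temperedness of the $L$-representation on $L^2_0(\nu)$) to upgrade almost-invariance under individual $\exp(\gor_1^+)$-elements to almost-invariance under the group. This is not available here: the spectral input concerns $\nu$ the $S$-invariant measure on a closed $S$-orbit, and at this stage of the argument no closed $P^+$- or $G$-orbit is known to exist --- the existence of a closed $P^+x_0$ is only produced much later, via Dani--Margulis nondivergence and Corollary~\ref{cor:closing-lem}, inside the proof of Theorem~\ref{thm:main}. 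Moreover, \cite[Prop.\ 8.1]{EMV}, which you invoke, does not use the spectral gap at all; it is a structural argument that iterates products, commutators, and $\Ad(H)$-conjugates with polynomial cost in $\epsilon$ and terminates by a finiteness argument on the lattice of intermediate Lie subalgebras. This is precisely why the paper's proof is a single observation: the EMV argument never uses the centralizer-triviality hypothesis, so it applies verbatim, and \S\ref{sec:subgroups} then pins down the possible outcome subgroups to the list $P^\pm, AP^\pm, AH, G$, each of which contains one of $P^+, P^-, G$.

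Your worry about the ``missing'' $\goh$-direction is also a red herring. You correctly note that $\gor_1^+$ is abelian so its internal commutators do not produce $\goh$, but this is irrelevant: $\mu$ is \emph{genuinely} $H$-invariant. In the paper's convention almost-invariance under $P^+$ means almost-invariance under all $g\in P^+$ with $|g|\leq 2$, and any such $g$ factors as $g=h\exp(v)$ with $|h|\ll 1$, $v\in\gor_1^+$, $\|v\|\ll 1$; composing the genuine $H$-invariance with a single $\exp(\gor_1^+)$-almost-invariance already gives the claim for $S=P^+$. No Mautner-type or spectral input is required or even applicable at this step. The only genuinely nontrivial case --- $w$ with components in both $\gor_1^+$ and $\gor_1^-$, leading to $S=G$ --- is handled by the same EMV bootstrap of products, commutators, and $\Ad(H)$-conjugates, again without any spectral gap.
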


\begin{proof}
We note that~\cite[Prop.\ 8.1]{EMV} is proved in the general 
setting that applies to our situation, i.e.\ the assumption on triviality of the centralizer
is not used in the proof of~\cite[Prop.\ 8.1]{EMV}. The claim thus follows from the results in \S\ref{sec:subgroups}.
\end{proof}

We also record the following.

\begin{lem}[Cf.~\cite{EMV}, Lemma 8.2]\label{lem:iterate}
There exists $\consta\label{k:iterate}>0$ with the following property.  
Let $S=P^+,$ $P^-,$ or~$G$ and suppose that $\mu$ is $\epsilon$-almost invariant under $S$
w.r.t.\ $\cS_d.$ Then 
\[
|\mu(q.f)-\mu(f)|\ll\epsilon |q|^{\ref{k:iterate}}\cS_d(f),\quad q\in S.
\]
\end{lem}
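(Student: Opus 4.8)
The plan is to bootstrap from $\epsilon$-almost invariance under the "unit ball" of $S$ (all $g\in S$ with $|g|\le 2$) to almost invariance under an arbitrary $q\in S$, at the cost of a polynomial factor in $\|q\|$. The mechanism is a telescoping/chaining argument: write $q$ as a product of boundedly many elements each of size $\le 2$, apply almost invariance to each factor in turn, and control the accumulation of errors using the behavior of the Sobolev norm under translation, property ($\cS$-1). Concretely, first I would fix $g\in S$ with $|g|\le 2$ and $h\in S$ arbitrary; from
\[
\mu((gh).f)-\mu(f)=\bigl(\mu(g.(h.f))-\mu(h.f)\bigr)+\bigl(\mu(h.f)-\mu(f)\bigr)
\]
and the hypothesis applied to $g$ acting on $h.f$, I get
\[
|\mu((gh).f)-\mu(f)|\le \epsilon\,\cS_d(h.f)+|\mu(h.f)-\mu(f)|\ll \epsilon\,|h|^{2d}\cS_d(f)+|\mu(h.f)-\mu(f)|,
\]
using ($\cS$-1). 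Iterating this, if $q=g_1\cdots g_n$ with each $|g_i|\le 2$ and $n\ll \log\|q\|$ (every element of $S$ of norm $\le R$ is a product of $O(\log R)$ elements of norm $\le 2$, since $S$ is generated by its unit ball and the word metric is comparable to $\log\|\cdot\|$), the error accumulates to
\[
|\mu(q.f)-\mu(f)|\ll \epsilon\,\cS_d(f)\sum_{j=1}^{n}|g_{j}\cdots g_{n}|^{2d}\ll \epsilon\,\cS_d(f)\cdot n\cdot \|q\|^{C d}
\]
for some absolute $C$ (bounding each partial product $|g_j\cdots g_n|$ by a fixed power of $\|q\|$, using $|g_j|\le 2$ and submultiplicativity of $|\cdot|$). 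Absorbing the factor $n\ll\log\|q\|$ into a slightly larger power of $\|q\|$ gives $|\mu(q.f)-\mu(f)|\ll\epsilon\,\|q\|^{\ref{k:iterate}}\cS_d(f)$ with $\ref{k:iterate}$ depending only on $d$ and $\dim G$, which is exactly the assertion (and matches \cite[Lemma~8.2.2]{EMV}).

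The step I expect to need the most care is the decomposition of an arbitrary $q\in S$ into $O(\log\|q\|)$ generators of bounded size together with the uniform polynomial bound on the partial products: one must know that $S$ (which by \S\ref{sec:subgroups} is one of $P^\pm$ or $G$, each a closed connected — indeed algebraic — subgroup) is generated by $\{g\in S:|g|\le 2\}$ and that the resulting word length is $\ll\log\|q\|$ with implied constant depending only on $G$. This is standard — it follows, e.g., from the fact that these groups are generated by one-parameter (unipotent or diagonal) subgroups and that $|\exp(tv)|$ grows at most exponentially in $t$ — but it is the one place where the specific structure of $S$ enters, so one either cites it or gives the one-line argument via the one-parameter subgroup decomposition. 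Everything else is the mechanical telescoping above, using only ($\cS$-1) and the hypothesis; since \cite[Lemma~8.2.2]{EMV} is proved without any centralizer assumption, one may alternatively just quote it and note, as in the proof of Proposition~\ref{prop:eff-gen}, that the relevant $S$ are exactly the ones furnished by \S\ref{sec:subgroups}.
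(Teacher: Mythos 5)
Your telescoping argument is a valid alternative but genuinely different from the paper's proof for $S=P^\pm$. The paper exploits the parabolic structure and the \emph{exact} $H$-invariance of $\mu$: start with the unit generators $q_{ij}=1+E_{ij}$ of $W$ and conjugate by a diagonal $a\in H$ to get $q_{ij}(t)=aq_{ij}a^{-1}=1+tE_{ij}$. Since $\mu$ is exactly $a$-invariant, $\mu(q_{ij}(t).f)=\mu(q_{ij}.(a^{-1}.f))$ and $\mu(a^{-1}.f)=\mu(f)$, so the almost-invariance hypothesis together with ($\cS$-1) gives an error $\ll\epsilon|a|^{2d}\cS_d(f)\ll\epsilon\,t^\star\cS_d(f)$; one then chains over the $kl$ coordinates of the abelian group $W$, and $H$-invariance extends the bound from $W$ to all of $P^+=HW$. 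Your route instead writes an arbitrary $q$ as a product of $O(\log\|q\|)$ unit-size elements and telescopes, which works uniformly for $S\in\{P^\pm,G\}$ but imports the nontrivial fact that the word metric on $S$ with respect to the unit ball is $\asymp\log\|\cdot\|$. Your stated justification for that input is not right as written: for $v$ nilpotent, $|\exp(tv)|$ grows only \emph{polynomially} in $t$, so naive subdivision of a one-parameter unipotent subgroup gives word length polynomial in $\|q\|$, not logarithmic. The logarithmic bound holds because a large unipotent element can be written as $aua^{-1}$ with $u$ of bounded size and $a$ diagonal of norm $\ll\|q\|^{1/2}$ -- which is exactly the conjugation device the paper applies directly to $\mu$, getting the polynomial loss for free via $H$-invariance rather than paying for it through a word-length input. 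Finally, the paper cites \cite[Lemma 8.2.2]{EMV} only for $S=G$ and supplies a separate argument for $P^\pm$, so the "just quote EMV" shortcut at the end of your proposal does not cover the parabolic cases without a further check that EMV's argument extends there.
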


\begin{proof}
For $S=G,$ this is proved in~\cite[Lemma 8.2]{EMV}. 
Let us assume $S=P^+.$
In view of our assumption we have 
\beq\label{eq:S-alm-inv}
|\mu(q.f)-\mu(f)|\ll \epsilon\cS_d(f),\quad\text{for all $|q|\leq 2.$}
\eeq
In particular,~\eqref{eq:S-alm-inv} 
holds true for $q_{ij}=1+E_{ij}$ with $j>i.$ Let now $a\in H$ be a diagonal element 
with $|a|\ll t^\star$ so that 
\[
aq_{ij}a^{-1}=1+tE_{ij}=:q_{ij}(t).
\]
Since $\mu$ is invariant under $a_{ij}(t),$ the above,~\eqref{eq:S-alm-inv}
and properties of the Sobolev norm, see \S\ref{sec:sob}, imply that
\[
|\mu(q_{ij}(t).f)-\mu(f)|\ll t^\star\epsilon\cS_d(f)\ll |q_{ij}(t)|^\star\epsilon\cS_d(f).
\] 
Since $W$ is abelian we obtained the lemma for elements in $W.$
Since $\mu$ is $H$-invariant this gives the claim for $S=P^+.$ The proof for $S=P^-$ is similar.
\end{proof}

\section{Proof of Theorem \ref{thm:main}}

Recall that $\operatorname{d}\!w$ is the Lebesgue measure on $W\cong\bR^{kl}$, and for any $\tau>0$
we put $W[\tau]:=\{w\in w: \|w\|_{\infty}\leq\tau \}$. 
We let $m$ denote the $G$-invariant probability measure on $X$.

\begin{lem}\label{lem:mixing}
There exists a constant $\consta\label{exp:t}$ 
satisfying the following property.
Let $s\geq 1$, put $\tau=e^{s(k+l)}.$
Suppose that  
\[
a_{-s}z\in\Sfrak(R),
\]
for $z\in X$. Then for any $f\in C_c^\infty(X)$ we have
\begin{equation}\label{eq:mixing}
\left|\frac{1}{|W[\tau]|}\int_{W[\tau]} f(wz)\operatorname{d}\!w -\int_Xf\operatorname{d}\!m\right| \ll R^\star e^{-\ref{exp:t}s}\mathcal S_d(f).
\end{equation}
\end{lem}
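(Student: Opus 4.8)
The statement is an effective equidistribution result for expanding translates of a horospherical piece $W[\tau]$, so the natural approach is to exploit that $W$ is the full expanding horospherical subgroup of $a_1$ and to reduce everything to the effective mixing/matrix-coefficient bound already available via the spectral gap, i.e.\ property ($\cS$-5). First I would set up coordinates: write $z' = a_{-s}z \in \Sfrak(R)$, so $z = a_s z'$. The average $\frac{1}{|W[\tau]|}\int_{W[\tau]} f(wz)\,dw$ becomes, after conjugating $W[\tau]$ back through $a_s$, an average of $f(a_s w' z')$ over $w'$ in a box of side $e^{-s(k+l)}\tau = O(1)$ (since the adjoint action of $a_s$ on $\gor_1^+=\Lie(W)$ scales by $e^{s(k+l)}$). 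Thus the quantity to estimate is $\big|\int_{W[1]^{(\text{approx})}} f(a_s w' z')\,dw' - \int_X f\,dm\big|$ up to normalization, which is precisely a "thickened" matrix coefficient evaluated along the one-parameter unipotent $u(t)$ inside $L$, the principal $\SL_2$.

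The key step is to relate the $W$-average to the $u(t)$-average so that ($\cS$-5) applies. I would use that $U = \{u(t)\}\subset W$ (by construction $u(t)$ is block-upper-triangular unipotent, hence lies in $W$ for the relevant block) is a one-parameter subgroup whose $a_s$-conjugate $a_s u(t) a_{-s} = u(e^{s(k+l)}t)$ ranges over $u(\cdot)$ on an interval of length $\asymp \tau = e^{s(k+l)}$ as $t$ ranges over a unit interval. More robustly, the standard trick (as in the proof of effective equidistribution of horospherical orbits, cf.\ the references the introduction alludes to) is: test $f$ against a bump function $\psi$ on a small ball, write the $W[\tau]$-average of $f$ as the $W[\tau]$-average of $f * \psi$ plus an error controlled by ($\cS$-3) and the $C^1$-size of the perturbation ($\ll e^{-s\star}\cS_d(f)$ after one uses that translating by small $w$ moves points by $\ll e^{-s}$ once we are inside the unit-box picture — actually one uses a smoothing at scale $e^{-\star s}$); then the smoothed average is, by unfolding, an honest matrix coefficient $\langle a_s.\tilde f_1, \tilde f_2\rangle$ with $\tilde f_i$ built from $f$ and $\psi$, and ($\cS$-5) with $g=a_s$ expressed as a product of $u(t)$'s and bounded elements — or more directly, Howe–Moore/quantitative mixing for $a_s$ deduced from the $1/\temp$-temperedness of $L$ — gives the decay $e^{-\ref{exp:t}s}$. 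The factor $R^\star$ enters through ($\cS$-1)–($\cS$-2) and the fact that $z'\in\Sfrak(R)$: the injectivity radius at $z'$ is $\gg R^{-\star}$, so the smoothing scale, bump normalization, and the Sobolev-norm bookkeeping each cost at most a power of $R$.

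Concretely the steps are: (i) change variables $w \mapsto a_s w' a_{-s}$ to pass from $W[\tau]$ to $W[O(1)]$ acting on $z'\in\Sfrak(R)$; (ii) replace the sharp indicator of the box by a smooth approximation at scale $\delta = R^{-\star}e^{-\star s}$, paying $R^\star e^{-\star s}\cS_d(f)$ by ($\cS$-3) and the small-injectivity-radius estimate; (iii) unfold the resulting smooth average over $W$ into a matrix coefficient of the $a_s$-action, using that $W$ is the expanding horospherical of $a_1$; (iv) invoke quantitative mixing of $\{a_s\}$ — equivalently, since $U\subset W$ and $a_s u(t)a_{-s}=u(e^{s(k+l)}t)$, reduce to ($\cS$-5) along $u(\cdot)$ with $|t|\asymp e^{s(k+l)}$, giving decay $(1+e^{s(k+l)})^{-1/2\temp} \ll e^{-\ref{exp:t}s}$ with $\ref{exp:t} = (k+l)/(2\temp)$ (up to adjusting constants); (v) collect the errors and absorb all $R$-dependence into $R^\star$. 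The main obstacle is step (iii)–(iv): one must be careful that "mixing of $a_s$" genuinely follows from temperedness of the \emph{principal} $\SL_2$ and not merely of $L$ abstractly — the cleanest route is to avoid $a_s$ altogether and run the thickening argument directly with the $u(t)$-average, noting that the $a_s$-conjugate of a unit box in $W$ \emph{contains} a long $U$-segment up to negligible transverse error, then apply ($\cS$-5); the transverse directions in $W$ (those not in $U$) are handled by Fubini together with the same smoothing, since they too are expanded by $a_s$.
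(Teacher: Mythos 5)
There is a genuine error in the core of your argument. You repeatedly assert that $U=\{u(t)\}$ is contained in $W$ and that $a_su(t)a_{-s}=u(e^{s(k+l)}t)$; both are false. By definition $u(t)$ is \emph{block-diagonal} (it is $u_k(t)\oplus u_l(t)$), while $W=W_G^+(a_1)$ is the strictly \emph{off-diagonal} block $\smallmat{I_k&*\\0&I_l}$; so $U\cap W=\{e\}$. Moreover, $a_s=\diag{e^{ls}I_k,\,e^{-ks}I_l}$ is a scalar in each block, hence \emph{commutes} with $u(t)$: $a_su(t)a_{-s}=u(t)$, not $u(e^{s(k+l)}t)$. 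Consequently your preferred ``cleanest route'' — avoiding $a_s$ and reducing directly to ($\cS$-5) by noting that a dilated $W$-box ``contains a long $U$-segment'' — does not make sense: the $a_s$-conjugate of a box in $W$ is just a larger box in $W$ and contains no $U$-segment at all, and the decomposition into ``$U$ directions plus transverse directions in $W$'' does not exist. This is the key step of the proof, so the argument as written has a real hole, not just an imprecision.

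Your first two steps (conjugate $W[\tau]$ back to $W[1]$ via $a_s$, land on $y=a_{-s}z\in\Sfrak(R)$, and track $R$-dependence through injectivity radius and Sobolev bookkeeping) match the paper, and your other suggested route — unfolding the $W$-average into a matrix coefficient for $a_s$ and invoking quantitative decay of matrix coefficients for $a_s$ coming from the $1/\temp$-temperedness of $L$ (not from ($\cS$-5) as stated, which is only a $u(t)$-statement) — is in substance what underlies the result. But the paper does not rerun that thickening argument: after the change of variables it simply cites the effective equidistribution of expanding horospherical translates of Kleinbock–Margulis, including the height dependence, and this reference encapsulates all of steps (ii)–(v). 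If you want to present a self-contained argument, you need to drop the $U\subset W$ shortcut entirely and instead thicken in the non-expanding directions of $a_s$ at $y\in\Sfrak(R)$, express the thickened $W[1]$-average as $\langle a_s.f,\psi\rangle$, and apply an effective decay of matrix coefficients of $a_s$ on $L^2_0(X,m)$; the required decay follows from the temperedness input of \S\ref{ss:spectral-input}, but is a different statement from ($\cS$-5).
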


\begin{proof}
By the definition of $\tau$,  note that $W[\tau]=a_{s}W[1]a_{-s}$.  
Denote $y:=a_{-s}z\in\mathfrak S(R).$
We have
\[
\frac{1}{|W[\tau]|}\int_{W[\tau]} f(wz) \operatorname{d}\!w=\frac{1}{|W[1]|}\int_{W[1]} f(a_{s}wy) \operatorname{d}\!w.
\]

Now using~\cite[Prop.~2.4.8]{KM1} (see also \cite[Thm.~2.3]{KM2} for the dependence on the height $R$) there exists a $\kappa>0$ so that the following holds:
\beq\label{eq:mixing-2}
\left|\frac{1}{|W[1]|}\int_{W[1]} f(a_{s}wy) \operatorname{d}\!w-\int_Xf \operatorname{d}\!m\right|\ll R^{\star}e^{-\kappa s}\mathcal S_d(f).
\eeq
\end{proof}

We will also need the following for the proof.
\begin{lem}\label{lem:orbit-faraway}
Suppose there exists some $\kappa>0$ so that
\[
{\mht}(Hx_0)\gg \vol(Hx_0)^{\kappa}. 
\]
Then Theorem~\ref{thm:main} holds (trivially).
\end{lem}    

\begin{proof}
If $\vol(Hx_0)<V$, the theorem holds trivially with $S=H$. 

On the other hand, our assumption
implies the theorem with $S=G$ if $V\leq \vol(Hx_0)$. 
Indeed, we may assume $\ref{height exponent}\geq\ref{vol exponent main}/\kappa$
so that 
\[
\mht(Hx_0)^{\ref{height exponent}} V^{-\ref{vol exponent main}}\gg \vol(Hx_0)^{\kappa\ref{height exponent}-\ref{vol exponent main}}\gg 1.
\]
This implies the conclusion of the theorem because of ($\cS$-2) in \S\ref{sec:sob}.
\end{proof}

\begin{proof}[Proof of Theorem~\ref{thm:main}]
Let $\mu$ denote the $H$-invariant probability measure on $Hx_0$ and $x_0=g_0\Gamma$. 
By Lemma \ref{lem:orbit-faraway} we may assume~\eqref{eq:low}.
Using Lemma~\ref{lem:eff-extrs-inv} we get almost invariance under an element in $\mathfrak r_1$. Then by Proposition~\ref{prop:eff-gen} and Lemma~\ref{lem:iterate} we get almost invariance under a subgroup $S=G,P^+,$ or $P^-.$  
Since the case $S=P^-$ is similar, we may assume $S=G$ or $P^+$ . Therefore, we assume throughout the argument that for any $f\in C_c^\infty(X)$ the following holds
\beq\label{eq:V-almost-inv}
|w_*\mu(f)-\mu(f)|\ll \epsilon S_d(f)\quad\text{for all $w\in W[\tau],$}
\eeq
with $\tau=\vol(Hx_0)^{\consta\label{k:tau-vol}}$ and $\epsilon=\vol(Hx_0)^{-\star}.$ 
Here $\ref{k:tau-vol}$ needs to be small enough to get~\eqref{eq:V-almost-inv}, we will need to optimize $\ref{k:tau-vol}$ further in the argument below.

We investigate  
\beq\label{eq:W-aver}
\frac{1}{|W[\tau]|}\int_{W[\tau]}\int_X f(wx)\operatorname{d}\!\mu(x)\operatorname{d}\!w.
\eeq
First note that~\eqref{eq:V-almost-inv} implies 
\beq
\label{eq:close-mu}
\left|\frac{1}{|W[\tau]|}\int_{W[\tau]}\int_X f(wx)\operatorname{d}\!\mu(x)\operatorname{d}\!w-\int_Xf \operatorname{d}\!\mu\right|  \ll \epsilon\mathcal S_d(f).
\eeq

Let $s$ be a parameter so that $\tau=e^{s(k+l)}$ and put $\mu_s=a_{-s}\mu.$ 
Apply Corollary~\ref{cor:closing-lem} to the measure $\mu_s$ and the closed orbit
\[
a_{-s}Hx_0=Ha_{-s}x_0.
\]
By the conclusion of that corollary there are two cases to consider.

{\it Case 1.} Assume Corollary~\ref{cor:closing-lem}(1) holds for $\mu_s.$ That is
\beq\label{eq:eq:mu-s-cusp-0}
\mu_{s}\pa{Ha_{-s}x_0\setminus \mathfrak S(R_0)}\leq 2^{-10}.
\eeq
For every $R$ put $B_{s,R}:=Hg_0\Gamma\setminus a_{s}\mathfrak S(R)$. Then by Lemma~\ref{lem:non-div}, 
for any $R>1$ we have 
\beq\label{eq:mu-s-cusp}
\mu\pa{B_{s,R}}=\mu_{s}\pa{Ha_{-s}x_0\setminus \mathfrak S(R)}\ll R^{-\ref{exp:escape}};
\eeq 
note that in view of~\eqref{eq:eq:mu-s-cusp-0}, we have $\mht(Ha_{-s}x_0)\ll R_0$ and
$R_0$ as chosen in Theorem \ref{thm:DM-parabolic} satisfies $R_0 \ll1$.

Let $R>R_0,$ using Fubini's theorem we can now rewrite~\eqref{eq:W-aver} in the form
\begin{align*}
\tfrac{1}{|W[\tau]|}\int_{W[\tau]}\int_X f(wx)\operatorname{d}\!\mu(x)\operatorname{d}\!w&=\tfrac{1}{|W[\tau]|}\int_X\int_{W[\tau]} f(wx)\operatorname{d}\!w\operatorname{d}\!\mu(x)\\
&=\tfrac{1}{|W[\tau]|}\int_{a_{s}\mathfrak S(R)}\int_{W[\tau]} f(wx)\operatorname{d}\!w\operatorname{d}\!\mu(x)\\
&\hspace{10mm}+\tfrac{1}{|W[\tau]|}\int_{B_{s,R}}\int_{W[\tau]} f(wx)\operatorname{d}\!w\operatorname{d}\!\mu(x).
\end{align*}
By \eqref{eq:mu-s-cusp} and property ($\cS$-2) of the Sobolev norm 
the second term above is $\ll \mathcal S_d(f)R^{-\star}.$ 
For the first term, note that by Lemma~\ref{lem:mixing} for any $z\in Hx_0\setminus B_{s,R}=Hx_0\cap a_s\mathfrak S(R)$
we have
\[
\left|\frac{1}{|W[\tau]|}\int_{W[\tau]} f(wz)\operatorname{d}\!w -\int_Xf \operatorname{d}\!m\right| \ll R^\star e^{-\ref{exp:t}s}\mathcal S_d(f).
\]
Hence using \eqref{eq:mu-s-cusp} one more time we get
\begin{multline*}
\left|\frac{1}{|W[\tau]|}\int_{a_{s}\mathfrak S(R)}\int_{W[\tau]} f(wx)\operatorname{d}\!w\operatorname{d}\!\mu(x)-
\int_Xf \operatorname{d}\!m\right|  \ll\\ R^\star e^{-\ref{exp:t}s}\mathcal S_d(f)+R^{-\star}\mathcal S_d(f)
\end{multline*}
Putting these together and recalling that $\tau=e^{\star s}$ we get
\begin{multline}
\label{eq:close-m}
\left|\frac{1}{|W[\tau]|}\int_{W[\tau]}\int_X f(wx)\operatorname{d}\!\mu(x)\operatorname{d}\!w-\int_Xf \operatorname{d}\!m\right|  \ll\\ R^\star \tau^{-\star}\mathcal S_d(f)+R^{-\star}\mathcal S_d(f).
\end{multline} 
Recall that so far our constraint on $\tau$ was only $\tau=\vol(Hx_0)^{\ref{k:tau-vol}}$ as in ~\eqref{eq:V-almost-inv}.
If we now choose $R= \vol(Hx_0)^\star$ and a small enough exponent we obtain
$R^\star \tau^{-\star}\ll \vol(Hx_0)^{-\star},$ 
then~\eqref{eq:close-mu} and~\eqref{eq:close-m} imply
\[
|\mu(f)-m(f)|\ll \vol(Hx_0)^{-\star}\mathcal S_d(f),
\]
and hence the theorem in this case.

We note that if the $\bQ$-rank of $\mathbb G$ is zero,
then we are always in case 1 of Corollary \ref{cor:closing-lem}.  

\medskip

{\it Case 2.} Recall again that $\mu_s$ is supported on $Ha_{-s}x_0=Ha_{-s}g_0\Gamma$. We now assume that Corollary~\ref{cor:closing-lem}(2) holds for $\mu_s$. In view of this assumption,
there exist $\lambda\in \Gamma\Xi$ and $\alpha\in\Delta$ such that 
\beq\label{eq:H-P+}
g_0^{-1}a_sHa_{-s}g_0=g_0^{-1}Hg_0\subset \lambda P^{(1)}_\alpha\lambda^{-1};
\eeq
moreover, there exists some $h_0\in H$ with $|h_0|\leq 2$ so that
\beq\label{eq:smallvol}
 d_\alpha(h_0a_{-s}g_0\lambda)<1.
\eeq

Recall from \S\ref{sec:DM} that $P_\alpha^{(1)}=\{g\in G:\vartheta_\alpha(g)v_\alpha=v_\alpha\}$,  where $v_\alpha$ corresponds to a rational subspace of $\gfrak$.
In view of~\eqref{eq:H-P+}, we have $Hg_0\lambda\subset g_0\lambda P^{(1)}_\alpha$; hence,    
\beq\label{eq:H-P+-again}
\mbox{$\vartheta_\alpha(hg_0\lambda)v_\alpha=\vartheta_\alpha(g_0\lambda)v_\alpha$ for all $h\in H$}.
\eeq
Using the definition of $\vartheta_\alpha$ and $v_\alpha$ again, we get from the above that 
\[
{\rm Ad}(g_0\lambda){\rm Lie}\pa{R_u(P_\alpha)}
\] 
is an invariant subspace for adjoint action of $H.$
Since $R_u(P_\alpha)$ is a unipotent group, this and the discussion in~\S\ref{sec:subgroups} imply that 
\beq\label{eq:had-to-ref}
\text{$g_0\lambda R_u(P_\alpha)\lambda^{-1} g_0^{-1}=W^+\;$ or $\;g_0\lambda R_u(P_\alpha)\lambda^{-1} g_0^{-1}=W^-$.}
\eeq  

We will consider these two subcases separately.

{\em Subcase 1.}\ Assume first that $g_0\lambda R_u(P_\alpha)\lambda^{-1} g_0^{-1}=W^-.$
We claim that under this assumption we have 
\beq\label{be:ht-tau}
{\mht}(Hx_0)\gg \tau^{\ref{k:stupid}}=\vol(Hx_0)^{\ref{k:tau-vol}\ref{k:stupid}}
\eeq
for some $\consta\label{k:stupid}>0$ depending only on $G.$ 
Note that~\eqref{be:ht-tau} implies the theorem in view of Lemma~\ref{lem:orbit-faraway}.

We now turn to the proof of~\eqref{be:ht-tau}. 
First note that since $g_0\lambda R_u(P_\alpha)\lambda^{-1} g_0^{-1}=W^-$ and both $\{a_s\}$ and $H$ normalize $W^-$,
we get that 
\beq\label{eq:normalize-W-}
\vartheta_\alpha(a_{-s}h_0g_0\lambda)v_\alpha\in \wedge^{\dim W^-}\Lie(W^-)
\eeq
and $\|\vartheta_\alpha(a_{-s}h_0g_0\lambda)v_\alpha\|=d_\alpha(a_{-s}h_0g_0\lambda)$.

We now have
\begin{align*}
\|\vartheta_\alpha(hg_0\lambda)v_\alpha\|&=\|\vartheta_\alpha(h_0g_0\lambda)v_\alpha\|&&\text{by~\eqref{eq:H-P+-again}}\\
&=\|\vartheta_\alpha(a_sa_{-s}h_0g_0\lambda)v_\alpha\|\\
&= e^{-\star s}\|\vartheta_\alpha(a_{-s}h_0g_0\lambda)v_\alpha\|&&\text{by~\eqref{eq:normalize-W-}}\\
&=e^{-\star s}d_\alpha(a_{-s}h_0g_0\lambda)\\
&\ll e^{-\star s}&&\text{by~\eqref{eq:smallvol}}.
\end{align*}

Recall from \S\ref{sec:DM} that $\vartheta(\lambda) v_\alpha$ corresponds to a rational subspace of $\gfrak$. 
Therefore, from the above we get that $\Ad(hg_0)\gfrak_{\mathbb Z}$ 
has a nontrivial sublattice with volume $\ll e^{-\star s}$ for every $h\in H$.
The claim in~\eqref{be:ht-tau} thus follows from the Minkowski's
theorem on successive minima in the geometry of numbers and since $e^{s(k+l)}=\tau=\vol(Hx_0)^{\ref{k:tau-vol}}$.

\medskip

{\em Subcase 2.}\ Assume now that $g_0\lambda R_u(P_\alpha)\lambda^{-1} g_0^{-1}=W^+$.
This assumption together with~\eqref{eq:H-P+} and the definitions of $P^+$, implies that 
\[
\lambda^{-1}g_0^{-1} P^+ g_0\lambda\subset P_\alpha^{(1)}=\{g\in G:\vartheta_\alpha(g)v_\alpha=v_\alpha\}.
\]
Hence we have
\beq\label{eq:P+-closed}
\lambda^{-1}g_0^{-1} P^+ g_0\lambda\text{  is the connected component of the identity in
$P_{\alpha}^{(1)}.$}
\eeq 
In particular, with $\lambda\in\Gamma\Xi\subset\mathbb G(\bQ)$ this implies 
that $P^+g_0\Gamma/\Gamma$ is a closed orbit. 
\begin{claim*} 
We have
\beq\label{eq:vol-P+}
\vol(P^+g_0\Gamma/\Gamma)\ll \tau^{\star}.
\eeq
\end{claim*}
Let us assume the claim and finish the proof.
We will show that 
\[
|\mu(f)-\mu_{P^+x_0}(f)|\ll \vol(Hx_0)^{-\star}\mathcal S_d(f)
\]
for any $f\in C^\infty_c(X)$, which will finish the proof.

The argument is based on finding a point which is {\em generic} both for $\mu$ and $\mu_{P^+x_0}$ unless 
we are in a situation where Lemma~\ref{lem:orbit-faraway} is applicable.
We first fix some notation.

Recall the definition of $R_{Hx_0}$ from~\eqref{eq:def-R0}.
 We apply the first part of the proof of Lemma~\ref{lem:separation}, up to and including
the estimate for the set $F$. For this part of the argument we only have to choose $T_0$
sufficiently large and $\delta$ has to satisfy the constraint $\delta\ll R_{Hx_0}^{-\star}$.
So we define $\delta=R_{Hx_0}^{-\star}$ appropriately and obtain the set $F\subset \mathfrak S(R_{Hx_0})$ be as in~\eqref{eq:def-F} for this $T_0$. 

Define
\beq\label{eq:choose-T0}
T_1:=\epsilon^{-\ref{k:gen-ni-tau}/2}=\vol(Hx_0)^{\star}
\eeq
where $\epsilon$ is as in~\eqref{eq:V-almost-inv}.
Recall that we may assume~\eqref{eq:low} by Lemma \ref{lem:orbit-faraway}, so that in particular $T_1>T_0$.

In view of the above claim, we will further assume that $\ref{k:tau-vol}$ in $\tau=\vol(Hx_0)^{\ref{k:tau-vol}}$ is small enough so 
that 
\beq\label{eq:T0-volP+x}
T_1^{-1}\vol(P^+x_0)<T_1^{-\star}.
\eeq 

We now turn to the construction of a generic point for $\mu$ and $\mu_{P^+x}$. 

Let $P^+_\delta:=\{\exp(v):v\in{\rm Lie}(P^+),\|v\|\leq\delta\}.$
Equip $P^+_\delta\times X$ with the product measure $m_{P^+}\times \mu$ where $m_{P+}$ is the Haar measure on $P^+$.
Now~\eqref{eq:V-almost-inv} together with an argument 
as in Lemma~\ref{lem:generic-almost-inv}(2), see also~\cite[Prop.\ 9.1]{EMV},
implies that the portion of points in 
$
P^+_\delta\times X,
$
so that $gx$ is {\em not} 
$[T_1,\epsilon^{-\ref{k:gen-ni-tau}}]$-generic for $\mu$
w.r.t.\ $\cS_{d'}$ is $\ll T_1^{-1}$.

Recall that $\mu(F)\geq 0.9.$
Therefore, using Fubini's Theorem, we get the following.
There exists a point $x_1\in F,$ which is $T_1$-generic for $\mu$ so that 
\beq\label{eq:alm-inv-tint}
m_{P^+}\left(|\{g\in P^+_\delta: gx_1\text{ is {\em not} $[T_1,\epsilon^{-\ref{k:gen-ni-tau}}]$-generic for $\mu$}\}\right)\ll T_1^{-1/2}m_{P^+}(P^+_\delta).
\eeq 
We may assume that the upper bound in~\eqref{eq:alm-inv-tint} is $<0.1m_{P^+}(P^+_\delta)$  (for otherwise
$\vol(Hx_0)$ is bounded).

Recall that $\mu_{P^+x_0}$ is the normalized probability $P^+$-invariant measure on $P^+x_0$. 
Applying Lemma~\ref{lem:generic-almost-inv}(1) with $\mu_{P^+x_0}$ we get that the $\mu_{P^+x_0}$ measure 
of the set of points which are {\em not} $T_1$-generic for $\mu_{P^+x_0}$ w.r.t.\ $\cS_d$ is $\ll T_1^{-1}$.
Therefore, taking the restriction of $\mu_{P^+x_0}$ to $P^+_\delta x_1\subset P^+x_1=P^+x_0$ we get the following.   
If $\mu_{P^+x_0}(P_\delta^+x_1)\gg T_1^{-1}$ (with a suitably chosen implicit multiplicative constant), then  
\beq\label{eq:local-Pdelta}
m_{P+}\left(\{g\in P^+_\delta: \text{$gx_1$
is $T_1$-generic for $\mu_{P^+x_0}$ w.r.t.\ $\cS_d$}\}\right)\geq 0.9m_{P^+}(P^+_\delta).
\eeq

In consequence, either~\eqref{eq:local-Pdelta} holds or $\delta$ is very small in the sense that $\mu_{P^+x_0}(P_\delta^+x_1)\ll T_1^{-1}$.
Let us first assume that the former holds. Then, in view of~\eqref{eq:alm-inv-tint} and~\eqref{eq:local-Pdelta}, we may replace 
$x_1$ by $gx_1$ for some $g\in P^+_\delta$ so that 
$gx_1$ is $[T_1,\epsilon^{-\ref{k:gen-ni-tau}}]$-generic for $\mu,$ moreover, $gx_1$ is 
$T_1$-generic for $\mu_{P^+x_0}.$ This, in particular, implies that
\beq \label{eq:maincase}
|\mu(f)-\mu_{P^+x_0}(f)|\ll T^{-1} \mathcal S_d(f)
\eeq
for all $T\in [T_1,\epsilon^{-\ref{k:gen-ni-tau}}].$ 

In view of~\eqref{eq:choose-T0},~\eqref{eq:maincase} completes the proof in this case.

It thus remains to consider the that $\mu_{P^+x_0}(P_\delta^+x_1)\ll T_1^{-1}$. 
First note that this is to say $m_{P+}(P^+_\delta)\ll T_1^{-1}\vol(P^+x_0)$. This and~\eqref{eq:T0-volP+x} imply that
there exists some $\consta\label{k:stupid-2}$ so that
\[
T_1^{\ref{k:stupid-2}}=\vol(Hx_0)^\star\ll \mht(Hx_0)=\delta^{-\star}. 
\]
This again implies the theorem by Lemma~\ref{lem:orbit-faraway}.

The proof thus is complete modulo the claim in~\eqref{eq:vol-P+}. 
\end{proof}

\begin{proof}[Proof of~Claim]
Recall from~\eqref{eq:P+-closed}
that $\lambda^{-1}g_0^{-1} P^+ g_0\lambda=P_{\alpha}^+$  is the connected component of the identity in
$P_{\alpha}^{(1)}$ and  that $\Xi\subset\mathbb G(\bQ)$ is a finite set.
For any $\xi\in\Xi$, define $P_{\alpha,\xi}:=\xi P_\alpha\xi^{-1}$, $P^{(1)}_{\alpha,\xi}=\xi P_\alpha^{(1)}\xi^{-1},$  $P^{+}_{\alpha,\xi}=\xi P_\alpha^{+}\xi^{-1},$ and $v_{\alpha,\xi}=\vartheta_\alpha(\xi) v_\alpha$. 
Let $A_{\alpha,\xi}$ denote the center of the Levi component of $P_{\alpha,\xi}.$
Then $A_{\alpha,\xi}P^+_{\alpha,\xi}$ has finite index in $P_{\alpha,\xi}.$ Let $B_{\alpha,\xi}\subset P_{\alpha,\xi}$ denote a set of representative for $P_{\alpha,\xi}/A_{\alpha,\xi}P^+_{\alpha,\xi}.$

Write $\lambda=\gamma\xi\in\Gamma\Xi.$
Since $G={\rm SO}(k+\ell)P_{\alpha,\xi}$ the above discussion implies that we may write
\[
g_0\gamma=cba_\gamma p^+
\] 
where $c\in{\rm SO}(k+\ell),$ $b\in B_{\alpha,\xi},$ $a_\gamma\in A_{\alpha,\xi},$ and $p^+\in P_{\alpha,\xi}^+$.
In particular, since $P^+g_0\Gamma=g_0\gamma P^+_{\alpha,\xi}\Gamma,$ we have
\beq\label{eq:vol-a}
\vol(P^+g_0\Gamma/\Gamma)\asymp\vol(a_\gamma P^+_{\alpha,\xi}\Gamma/\Gamma).
\eeq
Therefore, it suffices to bound $\vol(a_\gamma P^+_{\alpha,\xi}\Gamma/\Gamma).$

The argument is similar to the one in {\em Subcase 1}. Note first that by \eqref{eq:smallvol} we have
\[
\|\vartheta_\alpha(a_{-s}h_0g_0\lambda)v_\alpha\|=d_\alpha(a_{-s}h_0g_0\lambda)<1.
\]
for some $|h_0|\leq 2$. Also recall that $g_0\lambda R_u(P_\alpha)\lambda^{-1} g_0^{-1}=W^+$.
Therefore arguing as in {\em Subcase 1}, with $W^+$ in place of $W^-$, we have 
\[
\|\vartheta_\alpha(h_0g_0\lambda)v_\alpha\|=\|\vartheta_\alpha(a_sa_{-s}h_0g_0\lambda)v_\alpha\|\ll \tau^\star.
\] 
Now,  in view of the fact that $P^+_{\alpha,\xi}\subset P^{(1)}_{\alpha,\xi}$, the above discussion implies
\begin{align}\label{eq:bdd-height}
\|\vartheta_\alpha(a_\gamma)v_{\alpha,\xi}\|&\asymp\|\vartheta_\alpha(g_0\gamma)v_{\alpha,\xi}\|\\
\notag&=\|\vartheta_\alpha(g_0\gamma\xi)v_{\alpha}\|\\
&=\|\vartheta_\alpha(h_0g_0\lambda)v_{\alpha}\|\ll \tau^\star,
\end{align} 
 where \eqref{eq:had-to-ref} and $\lambda=\gamma\xi$ where used in the last equality.  This implies that $|a_{\gamma}|\ll\tau^\star.$ 
Now since the distortion of the volume when applying $g$ is bounded by $|g|^\star$ we get~\eqref{eq:vol-P+} from this bound and~\eqref{eq:vol-a}.
\end{proof}

\bibliographystyle{natbib}
\bibliography{Eff}

\end{document}